\newcommand{\ignore}[1]{}
\newtheorem{prop}{\bf Proposition}[section]
\newtheorem{defn}[prop]{\bf Definition}
\newtheorem{lem}[prop]{\bf Lemma}
\newtheorem{cor}[prop]{\bf Corollary}
\newtheorem{thm}[prop]{\bf Theorem}
\newenvironment{proof}
  {\begin{trivlist}\item[]{\bf Proof.}}
  {\hfill{\boldmath$\bowtie$}\end{trivlist}}
\newcommand{\setR}{\mathds{R}}
\newcommand{\setZ}{\mathds{Z}}
\newcommand{\setN}{\mathds{N}}
\newcommand{\components}{\ensuremath{\mathop{\mathcal{Z}\strut}}}
\newcommand{\pathComponents}{%
 \ensuremath{\mathop{\mathcal{Z}_\mathrm{path}\strut}}}
\newcommand{\cycleComponents}{%
 \ensuremath{\mathop{\mathcal{Z}_\mathrm{cycle}\strut}}}
\newcommand{\meander}{\ensuremath{\mathop{\mathcal{M}\strut}}}
\newcommand{\collapsedMeander}{\ensuremath{\mathop{\mathcal{CM}\strut}}}
\newcommand{\rainbowMeander}{\ensuremath{\mathop{\mathcal{RM}\strut}}}
\newcommand{\collapsedRainbowMeander}{\ensuremath{\mathop{\mathcal{CRM}\strut}}}
\newcommand{\arcs}{\ensuremath{\alpha}}
\newcommand{\bits}{\ensuremath{b}}
\newcommand{\arcsabove}{{
  \begin{tikzpicture}
    \path (0,-0.3ex) rectangle (1ex,0.3ex);
    \draw [thin] (0,0) -- (1ex,0) (0.2ex,0) arc (180:0:0.3ex);
  \end{tikzpicture}}}
\newcommand{\arcsbelow}{{
  \begin{tikzpicture}
    \path (0,-0.3ex) rectangle (1ex,0.3ex);
    \draw [thin] (0,0) -- (1ex,0) (0.2ex,0) arc (180:360:0.3ex);
  \end{tikzpicture}}}
\newcommand{\arcsboth}{{
  \begin{tikzpicture}
    \draw [thin] (0,0) -- (1ex,0) (0.5ex,0) circle (0.3ex);
  \end{tikzpicture}}}
\newcommand{\remainder}{\ensuremath{\mathop{\mathscr{R}\strut}}}
\newcommand{\Ord}{\ensuremath{\mathop{\mathcal{O}\strut}}}
\newcommand{\meanderArc}[2]{(#1,0) arc (0:180:0.5*#1-0.5*#2 and 0.25*#1-0.25*#2)}
\newcommand{\drawMeanderArc}[2]{\draw [thick] \meanderArc{#1}{#2};}
\newcommand{\drawMeanderRainbow}[3]{\foreach \slX in {2,4,...,#3} {
  \coordinate (tmpA) at (#1,0);
  \coordinate (tmpB) at (#2,0);
  \coordinate (tmpX) at (barycentric cs:tmpB=\slX/2-1,tmpA=#3-\slX/2);
  \coordinate (tmpM) at (barycentric cs:tmpA=0.5,tmpB=0.5);
  \draw [thick] let \p1 = ($(tmpX)-(tmpM)$) in
                (tmpX) arc (0:180:\x1 and \x1/2);}}
\newcommand{\meanderPath}[1]{
  \foreach \slX in {#1} {
    let \p1=({abs(\slX)},0), \p2=++(0,0) in
    arc (180:0:\x1/2-\x2/2 and {abs(\x1/4-\x2/4)*\slX/abs(\slX)}) }}
\newcommand{\drawMeanderPath}[2]{\draw [thick] (#1,0) \meanderPath{#2};}
\definecolor{background}{gray}{0.8}
\begin{document}
\pagestyle{empty}
\markboth
  {A.~Karnauhova and S.~Liebscher}
  {Connected components of meanders: I. Bi-rainbow meanders}
% {Counting components of rainbow meanders}
\pagenumbering{alph}

\null\vfill

\begin{center}
\Huge\bfseries
Connected components of meanders:

\LARGE\bfseries
I.~Bi-rainbow meanders
%Counting connected components\\of\\rainbow meanders
\end{center}

\vfill

\begin{center}
\textbf{\large Anna Karnauhova}

\texttt{anna.karnauhova@math.fu-berlin.de}

\bigskip

\textbf{\large Stefan Liebscher}

\texttt{stefan.liebscher@fu-berlin.de}

\vfill

   Freie Universität Berlin, Institut für Mathematik
\\ Arnimallee 3, 14195 Berlin, Germany
\end{center}

\vfill

\begin{center}
\includegraphics[height=0.4\vsize]{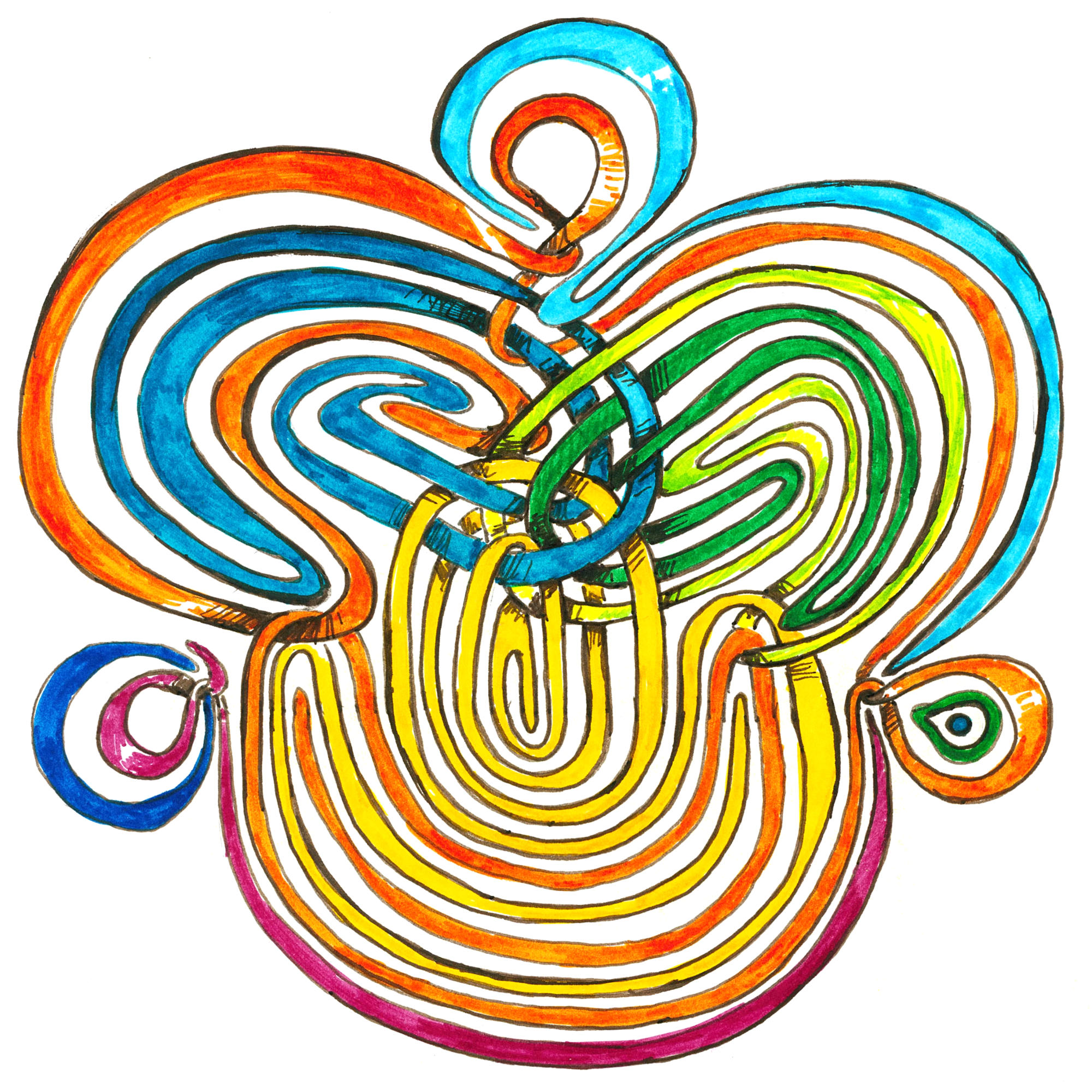}
\end{center}

\vfill

\begin{center}
draft version of April 13, 2015
%draft version of \today
\end{center}

%\begin{center}
%   to appear
%\\ ???
%\end{center}

\vfill\null

% ======================================================================
\clearpage

\null\vfill

\begin{abstract}

\noindent
\emph{Closed meanders} are planar configurations of
one or several disjoint closed Jordan curves intersecting
a given line or curve transversely.
They arise as shooting curves of parabolic PDEs in one space dimension,
as trajectories of Cartesian billiards,
and as representations of elements of Temperley-Lieb algebras.

Given the configuration of intersections, for example as a permutation
or an arc collection, the number of Jordan curves is unknown and needs
to be determined.

We address this question in the special case of \emph{bi-rainbow meanders},
which are given as non-branched families (rainbows) of nested arcs.
Easily obtainable results for small bi-rainbow meanders containing
up to four families suggest an expression of the number of curves by the
greatest common divisor (gcd) of polynomials in the sizes of the rainbow
families.

We prove however, that this is not the case.
In fact, the number of connected components of bi-rainbow meanders with
more than four families cannot be expressed as the gcd of
polynomials in the sizes of the rainbows.

On the other hand, we provide a complexity analysis of
\emph{nose-retraction} algorithms.
They determine the number of connected components of arbitrary
bi-rainbow meanders in logarithmic time.
In fact, the nose-retraction algorithms resemble the Euclidean algorithm,
which is used to determine the gcd, in structure and complexity.

Looking for a \emph{closed formula} of the number of connected components,
the nose-retraction algorithm is as good as a gcd-formula and therefore
as good as we can possibly expect.

\end{abstract}

\vfill\null

% ======================================================================
\cleardoublepage
\pagenumbering{arabic}
\setcounter{page}{1}
\pagestyle{myheadings}

\section{Introduction\label{secIntro}}

Meander curves in the plane emerge
as shooting curves of parabolic PDEs in one space dimension
\cite{FiedlerRocha1999-SturmPermutations},
as trajectories of Cartesian billiards
\cite{FiedlerCastaneda2012-Meander},
and as representations of elements of Temperley-Lieb algebras
\cite{DiFrancescoGolinelliGuitter1997-Meander}.

In general, we regard closed meanders as the pattern created by one or several
disjoint closed Jordan curves in the plane as they intersect a given line
transversely.
The pattern of intersections remains the same when we deform the curves into
collections of arcs with endpoints on the horizontal axis,
see figure \ref{figMeanderHomotopy}.

They induce a permutation on the set of intersection points with the
horizontal axis. Starting from the permutation,
the inverse problem raises two questions:
First, is a given permutation a meander permutation, i.e.~is it generated
by a meander? Second, if yes, is it generated by a single curve,
or more generally, of how many curves is the meander composed of?

We study the subclass of \emph{bi-rainbow meanders},
which are composed of several non-branched families of nested arcs
above and below the axis, see figure \ref{figRainbowMeander} and
definition \ref{defRainbowMeander}.
This subclass also appears naturally as representations of seaweed algebras
\cite{CollMagnantWang2012-Meander}.
For less than four upper families, the number of curves of the meander is
given as the greatest common divisor of expressions in the sizes of
the families,
\[
\begin{array}{lcl}
\components(\arcs_1) &=& \arcs_1, \\
\components(\arcs_1,\arcs_2) &=& \gcd(\arcs_1, \arcs_2), \\
\components(\arcs_1,\arcs_2,\arcs_3) &=& \gcd(\arcs_1+\arcs_2,\arcs_2+\arcs_3),
\end{array}
\]
see (\ref{eqGcdOfRainbows}) and \cite{FiedlerCastaneda2012-Meander}.
It is tempting to conjecture the existence of similar ``closed''
expressions for general bi-rainbow meanders. However, in section \ref{secNoGcd},
we prove that there are severe obstructions:

\begin{trivlist}
\item[\hskip \labelsep{\bfseries Theorem\ \ref{thmNoGcd}}]
\itshape
Let $n\ge4$ be given.
Then there do not exist homogeneous polynomials
$f_1,f_2 \in \setZ[x_1,\ldots,x_n]$ of arbitrary degree
with integer coefficients such that the
number of connected components $\components(\arcs_1,\ldots,\arcs_n)$
of every bi-rainbow meander $\rainbowMeander(\arcs_1,\ldots,\arcs_n)$
is given by the $\gcd(f_1(\arcs_1,\ldots,\arcs_n),f_2(\arcs_1,\ldots,\arcs_n))$.
In other words: to every choice of polynomials $f_1, f_2$,
we find a counterexample.
\end{trivlist}

After this partly negative result, we could look for more complicated formulae.
Instead of that, we shall shift our viewpoint a little bit.
We argue, that the $\gcd$ is an abbreviation for the Euclidean algorithm
rather than an ``explicit'' expression.
The Euclidean algorithm has logarithmic complexity: It requires
$\Ord(\log\arcs_1 + \log\arcs_2)$ steps to determine $\gcd(\arcs_1,\arcs_2)$.
Conversely, any formula for the number $\components$ of connected components
also provides a formula for the $\gcd$.

Therefore, whatever ``closed'' formula we find, it cannot be of smaller
complexity. In section \ref{secAlgorithms} we provide
algorithms which calculate the number of connected component
by nose retractions,
which we introduce in section \ref{secNoseRetractions}.
They have a structure very similar to the Euclidean algorithm.
They also have the same logarithmic complexity.
Although the search for exact $\gcd$ expressions might be futile,
we still find a $\gcd$-like interpretation of the number of connected
components of meanders.

% ======================================================================

\pdfbookmark[2]{Acknowledgements}{secAcknowledgements}
\subsection*{Acknowledgements}
Both authors have been supported by the Collaborative Research Centre 647
``Space--Time--Matter'' of the German Research Foundation (DFG).
We thank Bernold Fiedler, Pablo Casta\~neda, and Vincent Trageser
for useful discussions and encouragement.

% ======================================================================

\section{Meander curves\label{secMeanders}}

\begin{figure}
\centering
\begin{tikzpicture}[scale=0.75*\hsize/11cm, label/.style={ inner sep=2pt }]
\draw [thin] (0,0) -- (11,0);
\drawMeanderPath{1}{10,-7,8,-9,6,-1};
\drawMeanderPath{2}{5,-4,3,-2};
\node [label, above left ] at  (1,0)  {$1$};
\node [label, above left ] at  (2,0)  {$2$};
\node [label, above left ] at  (3,0)  {$3$};
\node [label, above right] at  (4,0)  {$4$};
\node [label, above right] at  (5,0)  {$5$};
\node [label, above left ] at  (6,0)  {$6$};
\node [label, above left ] at  (7,0)  {$7$};
\node [label, above right] at  (8,0)  {$8$};
\node [label, above right] at  (9,0)  {$9$};
\node [label, above right] at (10,0) {$10$};
\end{tikzpicture}
\caption{\label{figMeanderHomotopy}
A general meander as a pair of arc collections.}
\end{figure}
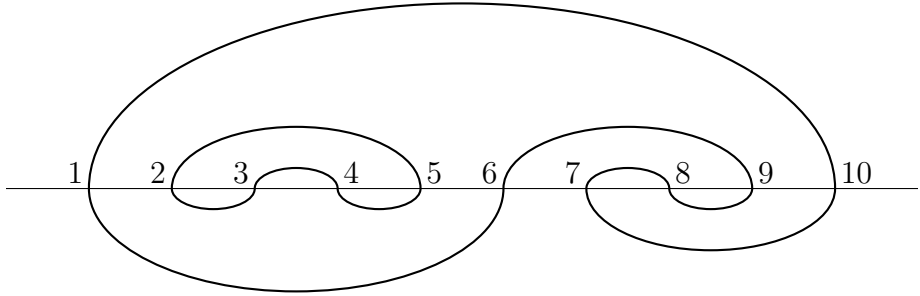

We start with one or several disjoint closed Jordan curves in the plane,
intersected by a given line --- without loss of generality the horizontal axis.
By homotopic deformations, without introduction or removal of intersections,
the curve can be represented by collections of arcs above and below the
horizontal axis. Both collections have the same number $\arcs$ of disjoint arcs
and hit the axis in the same $2\arcs$ points $\{1,\ldots,2\arcs\}$,
see figure \ref{figMeanderHomotopy}.
There are now several possibilities to represent a meander.

\subsection{...as a pair of products of disjoint
transpositions\label{secMeanderTranspositions}}

Each arc connects two points on the axis and thus defines a transposition.
The arc collections above and below the axis can both be represented as
products of the disjoint transpositions given by their arcs.
The example in figure \ref{figMeanderHomotopy} reads
\begin{equation}\label{eqTranspositionExample}
\begin{array}{rcl}
\pi^\arcsabove &=& (1,10)(2,5)(3,4)(6,9)(7,8), \\
\pi^\arcsbelow &=& (1,6)(2,3)(4,5)(7,10)(8,9),
\end{array}
\end{equation}
in the common cycle representation of permutations.
Such a product of disjoint transpositions represents a disjoint arc collection
if, and only if, no pair of transpositions is interlaced, i.e.
\[
\mbox{whenever} \quad a<b<\pi^{\arcsabove/\arcsbelow}(a) \quad \mbox{then} \quad
a<\pi^{\arcsabove/\arcsbelow}(b)<\pi^{\arcsabove/\arcsbelow}(a),
\qquad \pi^{\arcsabove/\arcsbelow} \in \{\pi^\arcsabove, \pi^\arcsbelow\}.
\]
Necessarily, both $\pi^\arcsabove$ and $\pi^\arcsbelow$
must interchange odd and even numbers,
\begin{equation}\label{eqTranspositionOddEven}
\begin{array}{rcl}
\pi^{\arcsabove/\arcsbelow}|_{\{\mathrm{odd}\}} = \{\mathrm{even}\},
\qquad
\pi^{\arcsabove/\arcsbelow}|_{\{\mathrm{even}\}} = \{\mathrm{odd}\}.
\end{array}
\end{equation}

\begin{prop}Let
\begin{equation}\label{eqCyclicPermutation}
\sigma \;:=\; (1,2,3,4,\ldots,2\arcs)
\end{equation}
be the cyclic permutation of the $2\arcs$ intersection points
with the axis.
Then a product $\pi^{\arcsabove/\arcsbelow}$
of $\arcs$ disjoint transpositions represents a disjoint arc collection if,
and only if,
the permutation $\pi^{\arcsabove/\arcsbelow} \sigma$
has exactly $\arcs+1$ (disjoint) cycles.
\end{prop}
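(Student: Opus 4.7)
The plan is to prove the biconditional by induction on $\arcs$ for the ``if'' direction and by an uncrossing argument for the ``only if'' direction. A more conceptual proof via ribbon graphs is available as a backup.

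For the implication \emph{non-interlacing $\Rightarrow$ $\arcs+1$ cycles}, I would induct on $\arcs$. The base $\arcs=1$ is immediate: $\pi=\sigma=(1,2)$, so $\pi\sigma=\mathrm{id}$ has $2=\arcs+1$ cycles. For the inductive step, the non-interlacing condition guarantees an innermost arc, i.e.\ a transposition $(a,a+1)\in\pi$ connecting consecutive indices (viewing the matching as a balanced parenthesization, this is the innermost ``()''-pair). A direct computation yields $\pi\sigma(a)=\pi(a+1)=a$, so $\{a\}$ is a singleton cycle of $\pi\sigma$. Removing $a$ and $a+1$ and relabeling the remaining $2(\arcs-1)$ points in order produces a smaller non-interlacing matching $\pi'$ for which $\pi'\sigma'$ coincides, after relabeling, with the restriction of $\pi\sigma$ to $\{1,\ldots,2\arcs\}\setminus\{a,a+1\}$. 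Induction supplies $\arcs$ cycles there, whence $\pi\sigma$ has $\arcs+1$.

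For the converse, I would use an \emph{uncrossing move}: whenever $\pi$ contains interlacing transpositions $(a,c),(b,d)$ with $a<b<c<d$, replace them by the non-interlacing pair $(a,b),(c,d)$ to form $\pi'$. Computing $\pi^{-1}\pi'=(a,d)(b,c)$ gives $\pi'\sigma=\pi\sigma\cdot(a-1,d-1)(b-1,c-1)$, i.e.\ multiplication by a product of two disjoint transpositions. A case analysis of how the four points $a-1,b-1,c-1,d-1$ distribute among the cycles of $\pi\sigma$ shows that the uncrossing always raises the cycle count by exactly $2$. Since any matching can be brought to a non-interlacing one in finitely many uncrossings, the first direction together with this bookkeeping gives the number of cycles of $\pi\sigma$ as $\arcs+1-2k$, where $k\geq 0$ counts the uncrossings used; hence at most $\arcs+1$ cycles, with equality precisely for non-interlacing $\pi$.

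A cleaner alternative, which I would mention in passing, is the ribbon-graph picture: interpret the $2\arcs$ intersection points as half-edges incident to a single vertex cyclically ordered by $\sigma$, with $\pi$ pairing them into $\arcs$ edges; the face-tracing rule is $\pi\sigma$, so its cycles enumerate the faces of the resulting oriented fatgraph, and Euler's formula yields $F=\arcs+1-2g$, with $g$ the genus of the minimal orientable surface carrying it. Non-interlacing realizability is exactly $g=0$, and the biconditional follows in one stroke. The main technical obstacle in either route is the exact $+2$ jump under uncrossing (equivalently, the face-tracing identification): it is elementary but needs careful tracking of how the four points of an uncrossing sit within the cycles of $\pi\sigma$, both before and after the move.
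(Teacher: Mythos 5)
Your first direction (non-interlacing implies $\arcs+1$ cycles, by stripping an innermost arc $(a,a+1)$, which is a fixed point of $\pi\sigma$, and inducting) is correct and is essentially the paper's own reduction argument. The genuine problem is the converse. The key claim that the uncrossing move $(a,c),(b,d)\mapsto(a,b),(c,d)$ \emph{always} raises the cycle count of $\pi\sigma$ by exactly $2$ is false once other chords are present: multiplication by the two disjoint transpositions $(a{-}1,d{-}1)(b{-}1,c{-}1)$ can merge cycles as well as split them. Concretely, take $\arcs=4$ and $\pi=(1,5)(3,7)(2,4)(6,8)$; then $\pi\sigma=(1,4)(2,7,6,3)(5,8)$ has $3$ cycles, but uncrossing the interlaced pair $(1,5),(3,7)$ into $(1,3),(5,7)$ gives $\pi'=(1,3)(5,7)(2,4)(6,8)$ with $\pi'\sigma=(1,4,7,6,5,8,3,2)$, a single cycle. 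So this uncrossing \emph{lowers} the count by $2$ (the genus goes up), your bookkeeping ``cycles $=\arcs+1-2k$'' collapses, and you cannot conclude that interlaced matchings have fewer than $\arcs+1$ cycles along this route. Note also that your closing parenthetical is misleading: the ``$+2$ jump under uncrossing'' is not equivalent to the face-tracing identification --- the latter is true, the former is not.

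To repair the converse you should promote one of the sound arguments to the main proof. Either use the face/genus picture outright: the cycles of $\pi\sigma$ are the face boundaries of the one-vertex fatgraph (equivalently, of the planar graph formed by the arcs plus the axis edges, which is how the paper phrases it via Euler's formula), so their number is $\arcs+1-2g\le\arcs+1$ with equality exactly when the chord diagram is planar, i.e.\ non-interlaced. Or use the paper's elementary contrapositive: arcs between (cyclically) neighbouring points are precisely the fixed points of $\pi\sigma$ and never participate in a crossing, so stripping them preserves any interlacing; for an interlaced collection the stripping therefore halts at a fixed-point-free configuration with $\tilde\arcs\ge2$ arcs, where every cycle has length at least $2$ and hence there are at most $\tilde\arcs$ cycles, giving at most $\arcs$ cycles for the original $\pi\sigma$.
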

\begin{proof}
Start with a disjoint arc collection.
Take the graph with the $v:=2\arcs$ vertices
$\{1,\ldots,2\arcs\}$.
The $e:=3\arcs-1$ edges are given by the $\arcs$ arcs of the collection together
with the $2\arcs-1$ edges
$\{(1,2),\ldots,(2\arcs-1,2\arcs)\}$ on the axis.
Each cycle of $\pi^{\arcsabove/\arcsbelow} \sigma$ corresponds to the
oriented boundary of a face.
By Euler's formula, the number of faces $f$ of a (planar) graph is given
by $f=e-v+2$. Therefore, there must be exactly $\arcs+1$ cycles.

Now start with an arbitrary arc collection.
If $\pi^{\arcsabove/\arcsbelow} \sigma$ has a fixed point $\ell$
then this fixed point belongs to an arc connecting the neighbouring points
$\ell, \ell+1$.
(The points $2\arcs$ and $1$ are also neighbours in this sense.)
This arc can be removed, decreasing $\arcs$ and the number of cycles by one
and keeping the structure of the remaining arcs (including their intersections)
and of the remaining cycles of $\pi^{\arcsabove/\arcsbelow} \sigma$.
If $\pi^{\arcsabove/\arcsbelow} \sigma$ does not have a fixed point
then all cycles have length at least 2.
Therefore there can be at most $\arcs$ cycles,
and the arc collection is not disjoint due to the first argument.

For disjoint initial arc collections $\pi^{\arcsabove/\arcsbelow}$,
the iterative removal of fixed points of $\pi^{\arcsabove/\arcsbelow} \sigma$
(alias arcs between neighbours) finishes at the trivial arc collection
$\tilde{\pi}^{\arcsabove/\arcsbelow} = (12)$ of a single arc.
Indeed, $\tilde{\pi}^{\arcsabove/\arcsbelow} \sigma = (1)(2)$
has two cycles and therefore
$\pi^{\arcsabove/\arcsbelow} \sigma$ has $\arcs+1$ cycles.

For non-disjoint initial arc collections $\pi^{\arcsabove/\arcsbelow}$,
the iterative removal of fixed points of $\pi^{\arcsabove/\arcsbelow} \sigma$
must stop earlier:
at an arc collection $\tilde{\pi}^{\arcsabove/\arcsbelow}$ such that
$\tilde{\pi}^{\arcsabove/\arcsbelow} \sigma$ has no fixed points.
The number of cycles of $\pi^{\arcsabove/\arcsbelow} \sigma$ is therefore
less than or equal to $\arcs$.
\end{proof}

\subsection{...as a single meander permutation\label{secMeanderPermutation}}

The two products of transpositions which we used in the previous section
both interchange odd and even numbers, see (\ref{eqTranspositionOddEven}).
Therefore, we can combine them into a single permutation
\begin{equation}\label{eqMenaderPermutation}
\pi^\arcsboth(k) \;=\;\left\{\begin{array}{ll}
   \pi^\arcsabove(k) &,\quad k \mbox{ odd},\\
   \pi^\arcsbelow(k) &,\quad k \mbox{ even}.
\end{array}\right.
\end{equation}
The cycles of this permutation directly correspond to the closed curves of
the meander. We find:

\begin{prop}
The number of connected components, i.e.~the number of closed Jordan curves,
of a meander equals the number of cycles of the associated
meander permutation $\pi^\arcsboth$.
Additionally, the number of cycles of $\pi^\arcsabove\pi^\arcsbelow$
is twice the number of connected components.
\end{prop}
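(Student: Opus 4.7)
The plan is to identify the cycles of $\pi^\arcsboth$ with the closed Jordan curves of the meander by direct tracing along the curves. First I pick any intersection point $k$ on the axis and follow the unique Jordan curve $\gamma$ through $k$. Since $\gamma$ is closed and crosses the axis transversely, its axis intersections form a cyclic sequence $k = p_1, p_2, \ldots, p_{2m}$ in which upper and lower arcs alternate. Property (\ref{eqTranspositionOddEven}) tells us that each arc joins one odd and one even point, so parities alternate along $\gamma$ as well. By the case distinction in (\ref{eqMenaderPermutation}), the map $\pi^\arcsboth$ applies $\pi^\arcsabove$ exactly when the current point is odd (where $\gamma$ leaves upward) and $\pi^\arcsbelow$ exactly when it is even (where $\gamma$ leaves downward), matching the arc chosen by $\gamma$. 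Hence one step of $\pi^\arcsboth$ is one step along $\gamma$, and the $\pi^\arcsboth$-orbit of $k$ is precisely the set of axis intersections of $\gamma$. Since every such intersection belongs to exactly one Jordan curve, cycles of $\pi^\arcsboth$ are in bijection with connected components of the meander.

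For the second assertion I would invoke (\ref{eqTranspositionOddEven}) once more: both $\pi^\arcsabove$ and $\pi^\arcsbelow$ interchange odd and even, so $\pi^\arcsabove\pi^\arcsbelow$ preserves parity and splits into two permutations, one of the odd points and one of the even points. Restricted to a component $\gamma$ of length $2m$, applying $\pi^\arcsabove\pi^\arcsbelow$ to an odd $p_i$ traverses two consecutive arcs of $\gamma$, so its orbit runs through exactly the $m$ odd points of $\gamma$ and closes; the analogous statement holds for the $m$ even points. Each component therefore contributes precisely two cycles to $\pi^\arcsabove\pi^\arcsbelow$, yielding the doubling.

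The only step requiring genuine care is the first: one must verify that the above/below alternation of $\gamma$ is faithfully encoded by the parity case distinction in (\ref{eqMenaderPermutation}), which in turn relies on the structural constraint (\ref{eqTranspositionOddEven}) that each arc joins opposite parities. Once this bookkeeping is in place, both claims are immediate consequences of the tracing argument, and no further combinatorial input is needed.
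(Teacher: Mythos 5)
Your proof is correct and takes essentially the same route as the paper, which only sketches it: the orbit-tracing identification of the cycles of $\pi^\arcsboth$ with the Jordan curves, and, for the doubling, parity invariance of $\pi^\arcsabove\pi^\arcsbelow$ together with the fact that it acts as a double step along each curve (the paper states this algebraically as $\pi^\arcsabove\pi^\arcsbelow$ being $(\pi^\arcsboth)^2$ on even and its inverse on odd points). Your write-up merely supplies the bookkeeping the paper leaves implicit.
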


The second part follows from the fact that
$\pi^\arcsabove\pi^\arcsbelow$
leaves the sets of even/odd numbers invariant and
equals $(\pi^\arcsboth)^2$ on the even and its inverse on the odd numbers.
For the example in figure \ref{figMeanderHomotopy}, we get
\begin{equation}\label{eqPermutationExample}
\begin{array}{rcl}
\pi^\arcsboth &=& (1,10,7,8,9,6)(2,3,4,5), \\
\pi^\arcsabove\pi^\arcsbelow &=& (1,9,7)(3,5)(2,4)(6,10,8).
\end{array}
\end{equation}

\subsection{...as a shooting permutation\label{secMeanderShooting}}

\begin{figure}
\centering
\begin{tikzpicture}[scale=0.75*\hsize/15cm, label/.style={ inner sep=2pt }]
\draw [thin] (0,0) -- (15,0);
\begin{scope}[shift={(0,0.5)}]
  \drawMeanderArc{12}{ 1}
  \drawMeanderArc{ 5}{ 2}
  \drawMeanderArc{ 4}{ 3}
  \drawMeanderArc{11}{ 6}
  \drawMeanderArc{10}{ 7}
  \drawMeanderArc{ 9}{ 8}
  \drawMeanderArc{14}{13}
\end{scope}
\begin{scope}[shift={(0,-0.5)}]
  \drawMeanderArc{ 1}{14}
  \drawMeanderArc{ 2}{ 3}
  \drawMeanderArc{ 4}{13}
  \drawMeanderArc{ 5}{ 6}
  \drawMeanderArc{ 7}{12}
  \drawMeanderArc{ 8}{11}
  \drawMeanderArc{ 9}{10}
\end{scope}
\foreach \slX/\slS in
  {1/1,2/10,3/11,4/12,5/9,6/8,7/3,8/6,9/5,10/4,11/7,12/2,13/13,14/14} {
  \draw [thick] (\slX,-0.5) -- (\slX,0.5);
  \node [label, above left ] at  (\slX,0)  {$\slX$};
  \node [label, below left ] at  (\slX,0)  {$\slS$};
}
\end{tikzpicture}
\caption{\label{figShootingCurve}
Shooting permutation of a connected meander.}
\end{figure}
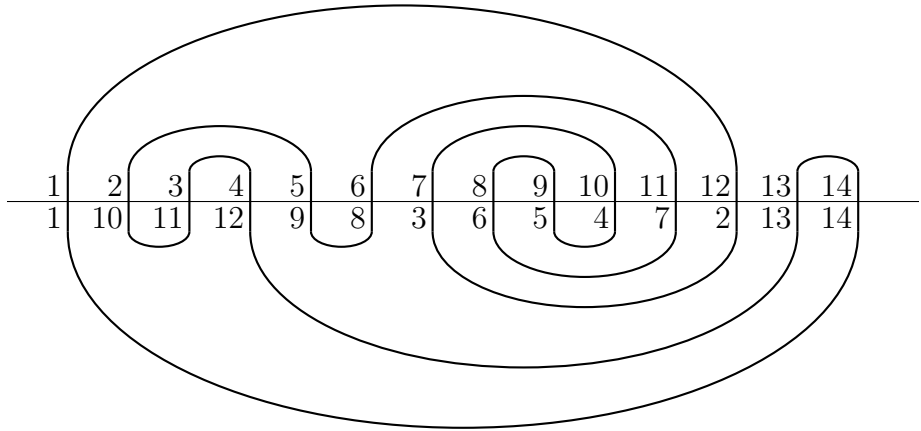

In \cite{Rocha1991-SturmAttractor, FiedlerRocha1999-SturmPermutations}
meander curves are found
as shooting curves of scalar reaction-advection-diffusion equations,
\begin{equation}\label{eqSturmPDE}
u_t \;=\; u_{xx} + f(x,u,u_x),
\end{equation}
in one space dimension, $x\in[0,L]$.
They are used to describe the global attractors of these systems.

Indeed, the $u$-axis $\{u_x|_{x=0}=0\}$, corresponding to a Neumann boundary
condition on the left boundary, is propagated by
\[
0 \;=\; u_{xx} + f(x,u,u_x)
\]
to a curve in the $(u,u_x)$-plane at the right boundary $x=L$.
In particular, intersections of this curve with the horizontal axis
yield stationary solutions of (\ref{eqSturmPDE})
with Neumann boundary conditions.

To facilitate its application in this context, the meander is described by a
permutation $\pi$ such that $(k,\pi(k))$ are the right and left boundary
values of the stationary solutions of the PDE.
In fact, the meander is connected by construction,
i.e.~consists of only one Jordan curve.
It is originally open, going to $\pm\infty$ for large $|u|$,
but can be artificially closed.
The permutation used in \cite{Rocha1991-SturmAttractor} yields
\begin{equation}\label{eqMenaderShootingPermutation}
\pi\left( (\pi^\arcsboth)^{k}(1) \right) \;=\; k+1,
\qquad
\pi^\arcsboth \;=\; \pi^{-1} \sigma \pi,
\end{equation}
with the cyclic permutation $\sigma$ as in (\ref{eqCyclicPermutation}).
The shooting permutation $\pi$ maps the enumeration of intersections along
the horizontal axis onto the enumeration along the shooting curve,
see figure \ref{figShootingCurve}
for an illustration of an artificially closed shooting curve
and \cite{FiedlerRocha1999-SturmPermutations, FiedlerRocha2009-SturmAttractors1}
for recent results on attractors of (\ref{eqSturmPDE}).

\subsection{...as a (condensed) bracket expression\label{secMeanderBracket}}

When we replace each arc by a pair of brackets, with the opening bracket
at the left end and the closing bracket at the right end of the arc,
we find corresponding balanced bracket expressions.
The example in figure \ref{figMeanderHomotopy} is represented by
\begin{equation}\label{eqBracketExample}
\frac{\mathds{\bigg[\,\Big[\;[\;]\;\Big]\,\Big[\;[\;]\;\Big]\,\bigg]}}
  {\Big[\;[\;]\;[\;]\;\Big]\,\Big[\;[\;]\;\Big]}.
\end{equation}
Each bracket expression can be condensed to a tuple of pairs of
positive integers
\begin{equation}\label{eqCondensedBracketExpression}
 ((\arcs_1^[,\arcs_1^]),\ldots,(\arcs_n^[,\arcs_n^])),
\end{equation}
with $\arcs_k^[$ representing consecutive opening brackets alias
left ends of arcs and
$\arcs_k^]$ representing consecutive closing brackets alias right ends of arcs.
Zero entries could be allowed but can always be removed.
The example in figure \ref{figMeanderHomotopy} then reads
\begin{equation}\label{eqCondensedBracketExample}
\frac{((3,2),(2,3))}{((2,1),(1,2),(2,2))}.
\end{equation}

On the other hand, each bracket expression represents a disjoint arc
collection provided
\begin{equation}\label{eqCondensedBracketExpressionConditions}
 \forall k \;\; \sum_{\ell=1}^k \arcs_\ell^[ \ge \sum_{\ell=1}^k \arcs_\ell^]
 \qquad\mbox{and}\qquad
 \sum_{\ell=1}^n \arcs_\ell^[ = \sum_{\ell=1}^n \arcs_\ell^] = \arcs.
\end{equation}
Indeed, the arcs given by matching brackets are automatically disjoint.

Note that this representation as condensed bracket expression is particularly
useful for arc collections which contain large families of non-branched
nested arcs.

\subsection{...as a cleaved rainbow meander}

\begin{figure}
\centering
\begin{tikzpicture}[scale=0.99*\hsize/43cm]
\fill [color=background] (0.7,0) -- (10.3,0) arc (0:-180:4.8 and 2.4);
\fill [color=background] (10.7,0) -- (20.3,0) arc (0:180:4.8 and 2.4);
\draw [background, line width=5pt, ->] (5.5,0) arc (180:360:5 and 4);
\drawMeanderPath{1}{10,-7,8,-9,6,-1};
\drawMeanderPath{2}{5,-4,3,-2};
\draw [thin] (0,0) -- (11,0);
\begin{scope}[shift={(22,0)}]
  \drawMeanderPath{1}{10,-11,14,-7,8,-13,12,-9,6,-15,20,-1};
  \drawMeanderPath{2}{5,-16,17,-4,3,-18,19,-2};
  \draw [thin] (0,0) -- (21,0);
\end{scope}
\end{tikzpicture}
\caption{\label{figMeanderFlip}
Flip of the lower arc collection of a meander.}
\end{figure}
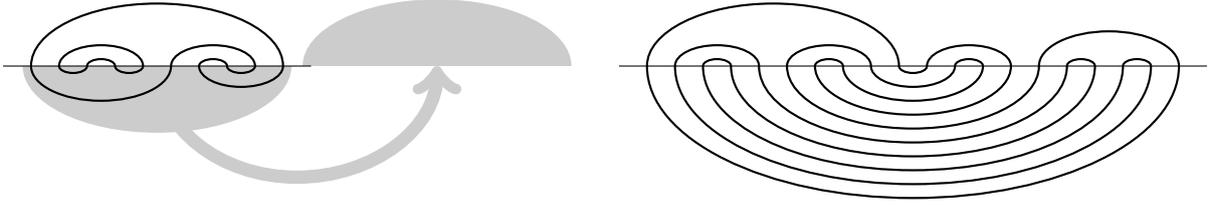

We are interested in the number of connected components,
i.e.~closed curves, of the meander. This number remains the same
if we ``simplify'' the lower arc collection of the meander by flipping
it to the upper part. More precisely, we rotate the lower arc collection
around a point on the horizontal axis to the right of the meander,
see figure \ref{figMeanderFlip}.
This operation doubles the number of intersection
points with the horizontal axis but replaces the lower arc collection by a
single non-branched family of nested arcs --- a \emph{rainbow family}.

In \cite{DiFrancescoGolinelliGuitter1997-Meander},
a meander is called a \emph{rainbow meander}
if the lower arcs form a single rainbow family, i.e.~are all nested.
A meander is called \emph{cleaved}
if none of the upper arcs connects a point $1 \le \ell \le \alpha$ on
the left half to a point  $\alpha < \tilde\ell \le 2\alpha$ on the right half
of the horizontal axis,
that is if the upper arc collection is split at the midpoint.
The flip, described above, then results in a cleaved rainbow meander.

Without loss of generality, from now on,
we assume that all meanders are rainbow meanders,
i.e.~have a single rainbow family as its lower arc collection.
The representations of permutations, discussed above,
take the respective forms:
\begin{equation}\label{eqFlippedRepresentations}
\begin{array}{rcl}
\pi^\arcsbelow &=& (1,2\arcs)(2,2\arcs-1)\cdots(\arcs,\arcs+1),\\
\pi^\arcsboth(k) &=& \left\{\begin{array}{ll}
   \pi^\arcsabove(k), & k \mbox{ odd},\\
   2\arcs-k+1, \quad & k \mbox{ even}.
\end{array}\right.
\end{array}
\end{equation}
As condensed bracket expression, the lower arc collection has the form
$((\arcs,\arcs))$ and can be omitted.
If necessary, we apply the flip.
The condensed bracket expression of the new upper arc collection is
the old one continued by the reflected old lower expression.
Specifically, for the example in
figures \ref{figMeanderHomotopy}, \ref{figMeanderFlip}, we obtain
\begin{equation}\label{eqBracketFlipExample}
((3,2),(2,3),(2,2),(2,1),(1,2)),
\end{equation}
see also the former representations (\ref{eqTranspositionExample},
\ref{eqBracketExample}, \ref{eqCondensedBracketExample}).

\begin{defn}[Meander]\label{defMeander}
We identify a meander with the condensed bracket expression
of its upper arc collection (after the flip, for non-rainbow meanders)
and use the notation
\begin{equation}\label{eqMeanderDef}
\meander \;=\; \meander((\arcs_1^[,\arcs_1^]),\ldots,(\arcs_n^[,\arcs_n^])),
\end{equation}
satisfying (\ref{eqCondensedBracketExpressionConditions}).
\end{defn}

Note that a given $n$-tuple (\ref{eqMeanderDef}) of pairs of positive integers
represents a flipped meander if, and only if, it is cleaved:
\begin{equation}\label{eqCleavedMeander}
\sum_{\ell=1}^k \arcs_\ell^[ \;=\; \sum_{\ell=1}^k \arcs_\ell^] \;=\;
\sum_{\ell=k+1}^n \arcs_\ell^[ \;=\; \sum_{\ell=k+1}^n \arcs_\ell^]
\;=\; \arcs/2,
\end{equation}
for an appropriate  $k$.
Otherwise, the inverse flip would create a meander curve with ``overhanging''
arcs from the upper to the lower side of the axis.
Such meanders can be interpreted as the intersection pattern of closed
Jordan curves with a half line instead of a line.
See again \cite{DiFrancescoGolinelliGuitter1997-Meander},
where this viewpoint is further developed.

\subsection{...as an element of a Temperley-Lieb algebra}

\begin{figure}
\centering
\begin{tikzpicture}[scale=0.8*\hsize/20cm, yscale=0.5]\small
\begin{scope}[ shift={(0,0)} ]
\node [left] at (-2,-4.5) {{\boldmath$e_0\!=\!1$:\strut}};
\draw [thin] (0,0.5) -- ++(0,-10) (3,0.5)-- ++(0,-10);
\draw [thick] \foreach \slY in {0,-1,-4,-5,-8,-9} { (0,\slY) -- ++(3,0) };
\node [left] at (0, 0) {$1$};
\node [left] at (0,-1) {$2$};
\node [left] at (0,-8) {$\alpha\!-\!1$};
\node [left] at (0,-9) {$\alpha$};
\node at (1.5,-2.5) {$\vdots$\strut};
\node at (1.5,-6.5) {$\vdots$\strut};
\end{scope}
\begin{scope}[ shift={(10,0)} ]
\node [left] at (-2,-4.5) {{\boldmath$e_\ell$:\strut}};
\draw [thin] (0,0.5) -- ++(0,-10) (3,0.5) -- ++(0,-10);
\draw [thick] \foreach \slY in {0,-1,-8,-9} { (0,\slY) -- ++(3,0) }
   (0,-5) -- ++( 1,0) arc(-90:90:0.2 and 0.5) -- ++(-1,0)
   (3,-5) -- ++(-1,0) arc(270:90:0.2 and 0.5) -- ++( 1,0);
\node [left] at (0, 0) {$1$};
\node [left] at (0,-1) {$2$};
\node [left] at (0,-4) {$\ell$};
\node [left] at (0,-5) {$\ell\!+\!1$};
\node [left] at (0,-8) {$\alpha\!-\!1$};
\node [left] at (0,-9) {$\alpha$};
\node at (1.5,-2.5) {$\vdots$\strut};
\node at (1.5,-6.5) {$\vdots$\strut};
\end{scope}
\end{tikzpicture}
\caption{\label{figTLgenerators}
Generators of the Temperley-Lieb algebra $TL_{\alpha}(q)$ as strand diagrams.}
\end{figure}
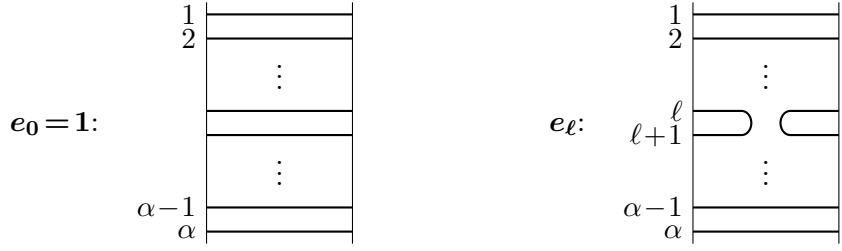

The multiplicative generators $e_0=1,e_1,\ldots,e_{\alpha-1}$
of a Temperley-Lieb algebra $TL_{\alpha}(q)$
\cite{TemperleyLieb1971-TemperleyLieb}
obey the relations
\begin{equation}\label{eqTLreleation}
\begin{array}{rcll}
e_\ell^2 &=& q e_\ell, &(a)\\
e_\ell e_k &=& e_k e_\ell, \qquad \mbox{if } |k-\ell| \ge 2, \quad &(b)\\
e_\ell e_{\ell\pm1} e_\ell &=& e_\ell. &(c)
\end{array}
\end{equation}
They can be visualized as strand diagrams, see figure \ref{figTLgenerators}.
Then, the strand diagram of a general product $e_{\ell_1}\cdots e_{\ell_n}$
is given as the concatenation of the individual strand diagrams of
$e_{\ell_1},\ldots,e_{\ell_n}$.
The properties (\ref{eqTLreleation}) allow isotopic transformations
of the strand diagrams.
Possible islands, i.e.~closed Jordan curves in the strand diagram,
can be removed and then appear as a pre-factor $q$ due to
(\ref{eqTLreleation}a).
Relations (\ref{eqTLreleation}) can be used to define a basis of reduced
elements written as pure products $e_{\ell_1}\cdots e_{\ell_n}$ without islands.

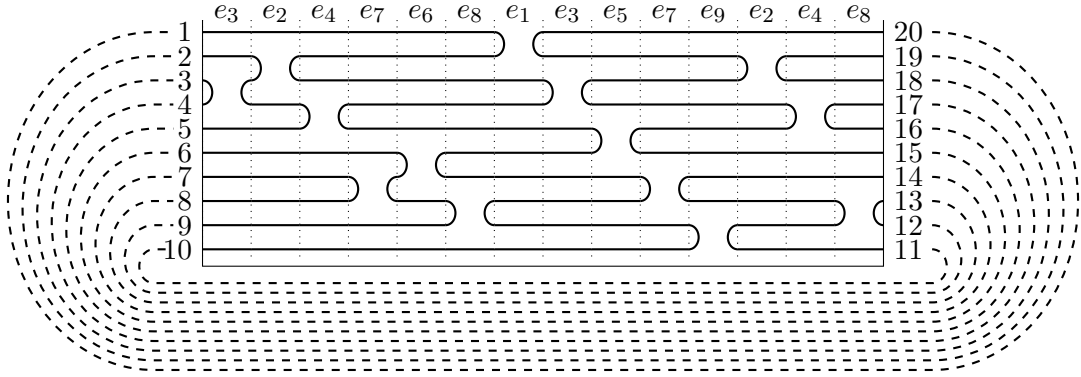
\begin{figure}
\centering
\begin{tikzpicture}[scale=0.8*\hsize/20cm, yscale=0.5]\small
\foreach \slX/\slE in
  {0/3,1/2,2/4,3/7,4/6,5/8,6/1,7/3,8/5,9/7,10/9,11/2,12/4,13/8} {
  \draw [thick] (\slX  ,-\slE) arc (-90:90:0.2 and 0.5);
  \draw [thick] (\slX+1,-\slE) arc (270:90:0.2 and 0.5);
  \foreach \slY in {2,3,...,\slE} {
    \ifnum\slY>\slE\else\draw [thick] (\slX,2-\slY) -- ++(1,0);\fi }
  \foreach \slY in {8,7,...,\slE} {
    \ifnum\slY<\slE\else\draw [thick] (\slX,-1-\slY) -- ++(1,0);\fi }
  \draw [thin, dotted] (\slX,0.5) -- ++(0,-10);
  \node [above] at (\slX.5,0) {$e_{\slE}$};
}
\draw [thin ] (0,0.5) -- ++(0,-10.2) -- ++(14,0) -- ++(0,10.2);
\foreach \slY in {0,...,9} {
  \draw [thick, dashed] (15,-\slY) arc (90:-90:3-\slY*0.3 and 7-\slY*0.7)
                      -- ++(-16,0) arc (270:90:3-\slY*0.3 and 7-\slY*0.7)
                      -- ++(0.4,0);
}
\foreach \slN in {1,...,10} {
  \node at (0,1-\slN) [left] {\slN}; }
\foreach \slN in {11,...,20} {
  \node at (14,\slN-20) [right] {\slN}; }
\end{tikzpicture}
\caption{\label{figTemperleyLieb}
Meander as the closure of an element of a Temperley-Lieb algebra.}
\end{figure}

A reduced element $e_{\ell_1}\cdots e_{\ell_n}$ becomes a
rainbow meander when we connect the left and right vertical boundaries
of the strand diagram by a rainbow family. This closure is illustrated in
figure \ref{figTemperleyLieb}, where we again obtain the meander
example (\ref{eqBracketFlipExample}) of figure \ref{figMeanderFlip}.
The horizontal line of the meander corresponds to the left and right
boundaries of the strand diagram of the Temperley-Lieb element,
glued at their bottom ends.

The trace $\mathrm{tr}(e)$ is defined as a linear function on
$TL_\alpha(q)$. It plays a crucial role in defining further operators on
the Temperley-Lieb algebra.
On products $e = e_{\ell_1}\cdots e_{\ell_n}$ the trace is
given by
\[
  \mathrm{tr}(e) \,:=\; q^{\components(e)},
\]
where $\components(e)$ is the number of connected components of
the strand diagram with identified endpoints of the same height
in the left and right boundary.
Without islands, this coincides with the number of Jordan curves in the
associated meander.
The ring element $q$ is the parameter of the Temperley-Lieb algebra.
See \cite{DiFrancescoGolinelliGuitter1997-Meander} for further background
on this correspondence.

\subsection{...as a Cartesian billiard\label{secBilliard}}

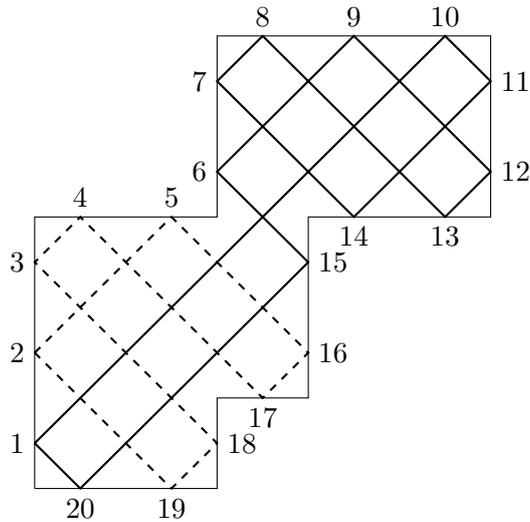
\begin{figure}
\centering
\begin{tikzpicture}[scale=0.45*\hsize/6cm]\small
\draw [thin]
  (0,0)--(0,3)--(2,3)--(2,5)--(5,5)--(5,3)--(3,3)--(3,1)--(2,1)--(2,0)--(0,0);
\draw [thick]
  (0,0.5)--(4.5,5)--(5,4.5)--(3.5,3)--(2,4.5)--(2.5,5)--(4.5,3)--(5,3.5)
         --(3.5,5)--(2,3.5)--(3,2.5)--(0.5,0)--(0,0.5);
\draw [thick, dashed]
  (0,1.5)--(1.5,3)--(3,1.5)--(2.5,1)--(0.5,3)--(0,2.5)--(2,0.5)--(1.5,0)
         --(0,1.5);
\node at (0,0.5) [left ]  {1};
\node at (0,1.5) [left ]  {2};
\node at (0,2.5) [left ]  {3};
\node at (0.5,3) [above]  {4};
\node at (1.5,3) [above]  {5};
\node at (2,3.5) [left ]  {6};
\node at (2,4.5) [left ]  {7};
\node at (2.5,5) [above]  {8};
\node at (3.5,5) [above]  {9};
\node at (4.5,5) [above] {10};
\node at (5,4.5) [right] {11};
\node at (5,3.5) [right] {12};
\node at (4.5,3) [below] {13};
\node at (3.5,3) [below] {14};
\node at (3,2.5) [right] {15};
\node at (3,1.5) [right] {16};
\node at (2.5,1) [below] {17};
\node at (2,0.5) [right] {18};
\node at (1.5,0) [below] {19};
\node at (0.5,0) [below] {20};
\end{tikzpicture}
\caption{\label{figBilliard}
Meander as trajectories of a Cartesian billiard.}
\end{figure}
\ignore{%%%%%%%%%%%%%%%%%%%%%%%%%%%%%%%%%%%%%%%%%%%%%%%%%%%%%%%%%%%%%%% tilted
\begin{figure}
\centering
\begin{tikzpicture}[scale=0.6*\hsize/20cm]\small
\draw [thin]  ( 0, 0)--( 6, 6)--(10, 2)--(14, 6)--(20, 0)
           -- (16,-4)--(12, 0)--( 8,-4)--( 6,-2)--( 4,-4)--( 0, 0);
\draw [thick] ( 1, 1)--(19, 1)--(19,-1)--(13,-1)--(13, 5)--(15, 5)
           -- (15,-3)--(17,-3)--(17, 3)--(11, 3)--(11,-1)--( 1,-1)--( 1, 1);
\draw [thick, dashed]  ( 3, 3)--( 9, 3)--( 9,-3)--( 7,-3)
                     --( 7, 5)--( 5, 5)--( 5,-3)--( 3,-3)--( 3, 3);
\node at ( 1, 1) [above]  {1\strut};
\node at ( 3, 3) [above]  {2\strut};
\node at ( 5, 5) [above]  {3\strut};
\node at ( 7, 5) [above]  {4\strut};
\node at ( 9, 3) [above]  {5\strut};
\node at (11, 3) [above]  {6\strut};
\node at (13, 5) [above]  {7\strut};
\node at (15, 5) [above]  {8\strut};
\node at (17, 3) [above]  {9\strut};
\node at (19, 1) [above] {10\strut};
\node at (19,-1) [below] {11\strut};
\node at (17,-3) [below] {12\strut};
\node at (15,-3) [below] {13\strut};
\node at (13,-1) [below] {14\strut};
\node at (11,-1) [below] {15\strut};
\node at ( 9,-3) [below] {16\strut};
\node at ( 7,-3) [below] {17\strut};
\node at ( 5,-3) [below] {18\strut};
\node at ( 3,-3) [below] {19\strut};
\node at ( 1,-1) [below] {20\strut};
\end{tikzpicture}
\caption{\label{figBilliard}
Meander as trajectories of a Cartesian billiard.}
\end{figure}
}%%%%%%%%%%%%%%%%%%%%%%%%%%%%%%%%%%%%%%%%%%%%%%%%%%%%%%%%%%%%%%%%%%%%%%%%%%%%%

A Cartesian billiard is played on a compact region B in the plane.
The boundary of B consists of horizontal and vertical connections of
corner points on the integer lattice $\setZ\times\setZ$.
The billiard trajectories are piecewise linear flights
on the diagonal grid $\{(x,y)\,|\,x\pm y\in\setZ+\frac{1}{2}\}$
and hit the boundary polygon in half-integer midpoints
$\setZ\times(\setZ+\frac{1}{2})\;\cup\;(\setZ+\frac{1}{2})\times\setZ$
with the standard reflection rule.
See figure \ref{figBilliard} for an illustration.

In \cite{FiedlerCastaneda2012-Meander} the close relation of
Cartesian billiards and meanders has been studied.
If the boundary of the billiard region is a single curve without
self intersections (or, more generally, of self intersection only at
integer lattice points~---
removable by making the corners of the boundary polygon round)
then the billiard trajectories correspond to meander curves.
Indeed, we take any consecutive enumeration of the half integer midpoints
along the billiard boundary.
They represent the intersection points of the meander
with the horizontal line.
The two families of parallel pieces of the billiard trajectories represent,
respectively, the upper and lower arcs of the meander.
In particular, the closed trajectories of the Cartesian billiard are mapped
onto the closed Jordan curves of the meander.

Conversely, a cleaved rainbow meander
$\meander((\arcs_1^[,\arcs_1^]),\ldots,(\arcs_n^[,\arcs_n^]))$
%with condition (\ref{eqCleavedMeander}),
can be easily represented by a Cartesian billiard.
Indeed, we construct the billiard boundary by starting at the origin
and attaching a horizontal or vertical unit interval for each of
the $2\alpha$ upper brackets of our meander representation:
On the first half, i.e.~for the first $\alpha$ brackets,
we go up for opening brackets and right for closing brackets.
Due to condition (\ref{eqCleavedMeander}),
we arrive at the point $(\alpha/2,\alpha/2)$,
and stay above the diagonal $x=y$.
On the second half, i.e.~for the last $\alpha$ brackets,
we go down for opening brackets and left for closing brackets.
We stay below the diagonal $x=y$ and arrive at the origin.
(The only possible self intersections are touching points on the diagonal.)
See figure \ref{figBilliard} for the representation of the meander
example (\ref{eqBracketFlipExample}) of figure \ref{figMeanderFlip}.

Without the cleavage (\ref{eqCleavedMeander}),
the construction has an additional twist.
For pairs of matching brackets on the same side of the midpoint,
we do the same as before.
For pairs of matching brackets on opposite sides of the midpoint,
we switch the rule for the bracket closer to the midpoint.
(We must exclude the case of brackets of the same distance to the midpoint,
which create a circle.)
If the opening bracket is closer to the midpoint,
we go right for the opening and left for the matching closing bracket,
switching the former rule for opening brackets of the first half.
If the closing bracket is closer to the midpoint,
we go up for the opening and down for the closing bracket,
switching the former rule for closing brackets of the second half.
This results in a closed billiard boundary without self intersections
(except, possibly, integer-lattice touching points which can be removed
by making the corners round),
provided the original meander is circle-free, i.e.~has no closed curve
consisting of only one upper and one lower arc.
See \cite{FiedlerCastaneda2012-Meander} for a complete proof.

\subsection{Bi-rainbow meanders}

\begin{figure}
\centering
\begin{tikzpicture}[scale=0.75*\hsize/29cm]
\draw [thin] (0,0) -- (29,0);
% upper
\drawMeanderArc{10}{1}
\drawMeanderArc{9}{2}
\drawMeanderArc{6}{5}
\node at (5.5,1) {$\vdots$\strut};
\node [right] at (5.5,1) {$\arcs_1$\strut};
\drawMeanderArc{28}{17}
\drawMeanderArc{27}{18}
\drawMeanderArc{23}{22}
\node at (22.5,1.25) {$\vdots$\strut};
\node [right] at (22.5,1.25) {$\arcs_n$\strut};
\node at (13.5,1) {\scalebox{2}{$\cdots$}};
% lower
\drawMeanderArc{1}{28}
\drawMeanderArc{2}{27}
\drawMeanderArc{13}{16}
\drawMeanderArc{14}{15}
\node at (14.5,-3.5) {\scalebox{2}{$\vdots$\strut}};
\node [right] at (14.5,-3.5) {$\displaystyle\;\arcs = \sum_1^n \arcs_\ell$};
\end{tikzpicture}
\caption{\label{figRainbowMeander}
General bi-rainbow meander.}
\end{figure}
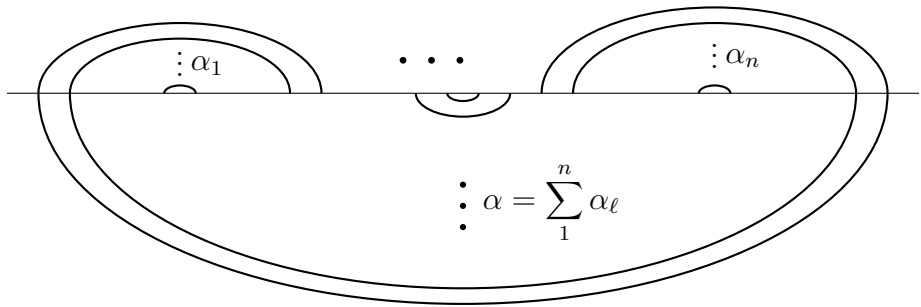

We have already called a single non-branched family of nested arcs
a \emph{rainbow family},
and a meander with a single lower rainbow family a rainbow meander.

If a meander consists only of rainbow families, that is if also the
upper arc collection consists only of non-branched families of nested arcs,
then we call the meander a bi-rainbow meander,
see figure \ref{figRainbowMeander}.

\begin{defn}[Bi-rainbow meander]\label{defRainbowMeander}
A bi-rainbow meander is a meander %of the form
\begin{equation}\label{eqRainbowMeanderDef}
\rainbowMeander \;=\; \rainbowMeander(\arcs_1,\ldots,\arcs_n)
  \;:=\; \meander((\arcs_1,\arcs_1),\ldots,(\arcs_n,\arcs_n)),
\end{equation}
consisting of $\arcs=\sum_{\ell=1}^n \arcs_\ell$ upper arcs in $n$
rainbow families and one lower rainbow family of $\arcs$ nested arcs.
\end{defn}

Bi-rainbow meanders~---
or rather their collapsed variants introduced in section \ref{secCollapse}~---
represent the structure of seaweed algebras
\cite{DergachevKirillov2000-SeaweedAlgebras, CollMagnantWang2012-Meander}.
Here, the number of connected components is related to the index of the
associated seaweed algebra.

In \cite{FiedlerCastaneda2012-Meander, CollMagnantWang2012-Meander},
the question is raised, how to compute the number of connected components,
i.e.~closed curves,
\begin{equation}\label{eqRainbowComponentsDef}
\components \;=\; \components(\rainbowMeander)
\;=\; \components(\arcs_1,\ldots,\arcs_n),
\end{equation}
of a bi-rainbow meander.
In fact the easy expressions
\[
\components(\arcs_1,\arcs_2) = \gcd(\arcs_1,\arcs_2), \qquad
\components(\arcs_1,\arcs_2,\arcs_3) = \gcd(\arcs_1+\arcs_2,\arcs_2+\arcs_3),
\]
see (\ref{eqGcdOfRainbows}),
in terms of the greatest common divisors provoked the call
for a general ``closed'' formula.
This has also been the initial purpose of our investigation.

% ======================================================================

\section{Collapsed meanders}
\label{secCollapse}

In this section, we introduce the \emph{collapse} of a meander.
We start with a bi-rainbow meander
$\rainbowMeander \;=\; \rainbowMeander(\arcs_1,\ldots,\arcs_n)$,
drawn as arc collections in the plane, see figure \ref{figRainbowMeander}.
Above and below the horizontal axis, the meander splits the half plane into
connected components.
Coming from infinity, we colour each second component black:
If a path in the half plane from infinity into the component crosses an odd
number of arcs, then we colour this component.
The coloured components hit the horizontal axis in the intervals
$[2\ell-1, 2\ell]$, $\ell > 1$. In particular,
the coloured components above and below the horizontal axis match.
Furthermore, each arc bounds exactly one coloured component.

There are two types of coloured components. Most coloured components
are ``thickened arcs'' bounded by two (neighbouring) arcs of the same rainbow
family and two intervals on the axis.
%(This also provoked the nickname
%``\foreignlanguage{russian}{колбаска}-problem''.)
The only exceptions are the innermost components of rainbow families with
an odd number of arcs: they are half disks bounded by an arc and an
interval on the axis.
See figure \ref{figCollapsedRainbow} for an illustration.

\begin{figure}
\centering
\includegraphics[width=\textwidth]{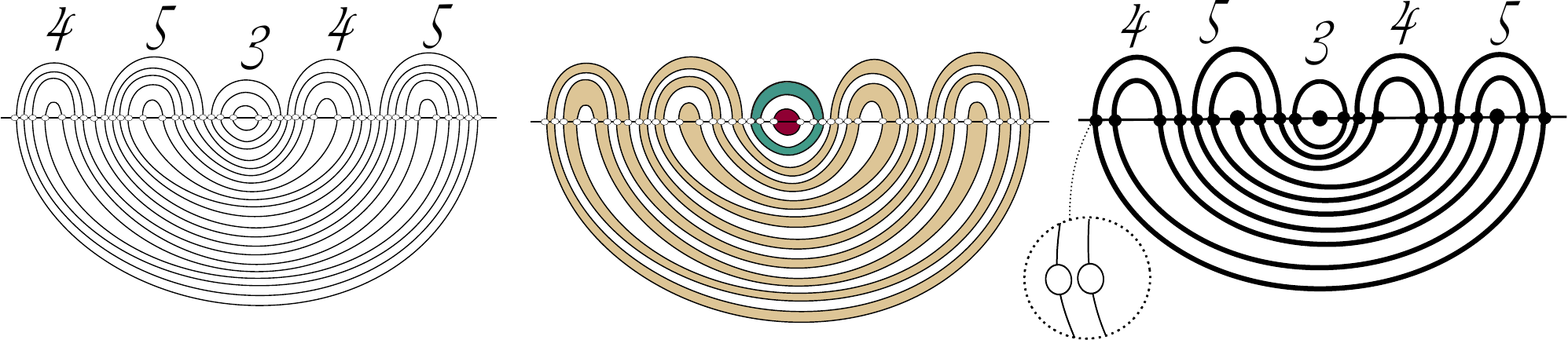}
\caption{\label{figCollapsedRainbow}
Collapse of the bi-rainbow meander $\rainbowMeander(4,5,3,4,5)$.
From left to right: $\rainbowMeander$, coloured domains, and
$\collapsedRainbowMeander$.
The collapsed bi-rainbow meander consists of one path, one cycle,
and an isolated point (counted as a second path).}
\end{figure}
\ignore{%%%%%%%%%%%%%%%%%%%%%%%%%%%%%%%%%%%%%%%%%%%%%%%%%%%%%%%%%%%%%%%%%%%%%%
\begin{figure}
\centering
\begin{tikzpicture}[scale=0.75*\hsize/43mm]
\draw [thin] (0,0) -- (4.3,0);
\drawMeanderRainbow{1.4}{0.1}{14}
\drawMeanderRainbow{1.8}{1.5}{ 4}
\drawMeanderRainbow{2.4}{1.9}{ 6}
\drawMeanderRainbow{3.4}{2.5}{10}
\drawMeanderRainbow{4.2}{3.5}{ 8}
\drawMeanderRainbow{0.1}{4.2}{42}
\end{tikzpicture}
\begin{tikzpicture}[scale=0.75*\hsize/43mm]
\draw [thick, fill=magenta!70!white]
  (2.1,0) \meanderPath{2.2,-2.1};
\draw [thick, fill=background]
  (1.9,0) \meanderPath{2.4,-1.9} (2.0,0) \meanderPath{-2.3,2.0};
\draw [thick, fill=magenta!70!white]
  (0.7,0) \meanderPath{0.8,-3.5,4.2,-0.1,1.4,-2.9,3.0,-1.3,0.2,-4.1,3.6,-0.7};
\draw [thick, fill=background]
  (0.3,0) \meanderPath{1.2,-3.1,2.8,-1.5,1.8,-2.5,3.4,-0.9,0.6,-3.7,4.0,-0.3}
  (0.4,0) \meanderPath{-3.9,3.8,-0.5,1.0,-3.3,2.6,-1.7,1.6,-2.7,3.2,-1.1,0.4};
\draw [thin] (0,0) -- (4.3,0);
\end{tikzpicture}
\begin{tikzpicture}[scale=0.75*\hsize/43mm, thick/.style={line width=0.7mm}]
\draw [thin] (0,0) -- (4.3,0);
\draw [magenta!70!black, thick] (2.15,0) circle (0.014);
\draw [magenta!70!black, thick]
  (0.75,0) \meanderPath{-3.55,4.15,-0.15,1.35,-2.95};
\draw [thick] (1.95,0) \meanderPath{2.35,-1.95};
\draw [thick] (0.35,0) \meanderPath{
  1.15,-3.15,2.75,-1.55,1.75,-2.55,3.35,-0.95,0.55,-3.75,3.95,-0.35};
\end{tikzpicture}
\caption{\label{figCollapsedRainbow}
Collapse of the bi-rainbow meander $\rainbowMeander(7,2,3,5,4)$.
From top to bottom: $\rainbowMeander$, coloured domains, and
$\collapsedRainbowMeander$.
The collapsed bi-rainbow meander consists of one path, two cycles,
and an isolated point (counted as a second path).}
\end{figure}
}%%%%%%%%%%%%%%%%%%%%%%%%%%%%%%%%%%%%%%%%%%%%%%%%%%%%%%%%%%%%%%%%%%%%%%%%%%%%%

\begin{defn}[Collapsed bi-rainbow meander]
The collapsed bi-rainbow meander,
denoted by $\collapsedRainbowMeander \;=\;
\collapsedRainbowMeander(\arcs_1,\ldots,\arcs_n)$,
arises when we collapse pairs of arcs of the
bi-rainbow meander $\rainbowMeander(\arcs_1,\ldots,\arcs_n)$ to single arcs,
that is when we collapse each coloured component, described above,
into an arc or a point.
The value $\arcs_\ell$ is the number of arcs in the $\ell$-th upper family of
$\rainbowMeander$ and the number of intersections with the axis
in the $\ell$-th upper family of $\collapsedRainbowMeander$.

The collapsed bi-rainbow meander is again composed of several rainbow arc
collections above and a single rainbow arc collection below the axis.
However, if $\arcs_\ell$ is odd, then the innermost ``arc'' of this upper
rainbow collection is a single point,
which we call \emph{semi-isolated}.
Similarly, if $\arcs=\sum_{\ell=1}^n \arcs_\ell$ is odd then the innermost
``arc'' of the lower rainbow collection is a semi-isolated point.
\end{defn}

Combining the arc collections of $\collapsedRainbowMeander$ above and below
the axis, we find again Jordan curves.
These curves can either be closed \emph{cycles} or open \emph{paths} ending
in semi-isolated points.
If $\arcs = 2\sum_{\ell=1}^{m-1} \arcs_\ell + \arcs_m$ is odd, then
the lower semi-isolated point coincides with the upper semi-isolated point
of the $m$-th rainbow family and becomes an isolated point of the
collapsed bi-rainbow meander. We consider such an isolated point to be a path.

\begin{thm}\label{thmCollapsedRainbowComponents}
The number $\components(\arcs_1,\ldots,\arcs_n)$ of connected components
(Jordan curves) of the bi-rainbow meander
$\rainbowMeander(\arcs_1,\ldots,\arcs_n)$
equals the sum of the number of paths and twice the number of cycles of the
collapsed bi-rainbow meander $\collapsedRainbowMeander(\arcs_1,\ldots,\arcs_n)$:
\begin{equation}\label{eqCollapsedRainbowComponents}
\components(\rainbowMeander) \;=\; \pathComponents(\collapsedRainbowMeander)
  + 2 \cycleComponents(\collapsedRainbowMeander).
\end{equation}
\end{thm}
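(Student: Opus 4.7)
The plan is to establish a correspondence between the path/cycle components of $\collapsedRainbowMeander$ and the Jordan curves of $\rainbowMeander$ under which each cycle accounts for two Jordan curves and each path (including an isolated point) accounts for one. First I would describe the collapse explicitly on the axis: the $2\arcs$ intersection points of $\rainbowMeander$ are grouped into $\arcs$ pairs $\{2j-1,2j\}$, each of which becomes a single vertex $v$ of $\collapsedRainbowMeander$. Since every arc of $\rainbowMeander$ connects an odd endpoint to an even one by (\ref{eqTranspositionOddEven}), I would label the two preimages of each $v$ uniformly as $v^+$ (odd) and $v^-$ (even), so that every arc of $\rainbowMeander$ automatically flips this sign.

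The next step is to read off what each collapsed feature lifts to. Using the nested structure within each rainbow family I would verify that a collapsed arc between vertices $v$ and $w$ lifts to exactly two arcs --- an outer one from $v^+$ to $w^-$ and an inner one from $v^-$ to $w^+$ --- whereas a semi-isolated point at $v$ lifts to the single innermost arc from $v^+$ to $v^-$. At any non-semi-isolated vertex of $\collapsedRainbowMeander$ there is precisely one upper and one lower collapsed arc attached, which gives each of $v^+$ and $v^-$ exactly one upper and one lower arc incident in $\rainbowMeander$, matching the alternating upper/lower structure of Jordan curves.

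With this in place, I would trace Jordan curves directly. For a cycle component $P \subset \collapsedRainbowMeander$ of length $k$ (necessarily even, as upper and lower collapsed arcs alternate along $P$), starting at $v_0^+$ and following the alternation traverses $P$ once, flipping the sign across every edge, and closes up at $v_0^+$ after $k$ steps; this consumes exactly one lift per collapsed edge. A companion Jordan curve starting at $v_0^-$ uses the complementary lifts, so each cycle contributes two Jordan curves. For a path $P$ of length $k$ with semi-isolated endpoints $v_0, v_k$, a single Jordan curve traverses $P$ forward from $v_0^+$ using one lift per collapsed edge, bounces through the semi-isolated arc at $v_k$, returns along $P$ using the complementary lifts, and closes through the semi-isolated arc at $v_0$, exhausting all $2k+2$ lifted arcs. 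The degenerate isolated-point case, occurring when $\arcs$ is odd and the upper and lower semi-isolated points coincide, yields a single Jordan curve consisting of the two semi-isolated arcs at that vertex. Summing over components of $\collapsedRainbowMeander$ then gives (\ref{eqCollapsedRainbowComponents}).

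The main obstacle is the verification that the $\pm$-labeling is globally consistent --- that is, that each outer/inner pair of a collapsed arc truly exhibits the twist $v^+\leftrightarrow w^-,\ v^-\leftrightarrow w^+$, independently of whether we are in one of the $n$ upper rainbow families or in the single lower rainbow. This uses the specific geometry of the collapse: two adjacent arcs of any rainbow family bound a coloured component whose two axis intervals are precisely of the form $\{2k-1,2k\}$ and $\{2m-1,2m\}$, and the odd-even property (\ref{eqTranspositionOddEven}) then forces the outer and inner lifts to cross the sign as claimed. Once this local twist property is checked in each of the $n+1$ rainbows, the traversal argument and the counting close without further surprises.
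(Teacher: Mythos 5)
Your proposal is correct, but it proves the theorem by a different mechanism than the paper. The paper's proof is a two-line topological argument: it reverses the collapse, so that each component of $\collapsedRainbowMeander$ becomes a ``thick'' curve, i.e.\ a domain in the plane whose boundary consists of Jordan curves of $\rainbowMeander$; a thickened path is simply connected and has one boundary curve, while a thickened cycle is a ring domain and has two. Your argument replaces this appeal to planar topology by an explicit combinatorial lift: each collapsed arc has exactly two preimage arcs, each preimage flips the odd/even label (which is exactly the two sides of the thickened curve), semi-isolated arcs act as the turning points where the single boundary curve of a disk-like domain switches sides, and the traversal with the parity bookkeeping shows that a cycle of length $k$ carries two closed lifts while a path carries one, with all $2k+2$ (resp.\ $2k$) lifted arcs exhausted. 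What your route buys is a proof entirely at the level of the arc/permutation data, including the explicit verification that every arc of $\rainbowMeander$ is consumed exactly once and hence that summing over components of $\collapsedRainbowMeander$ accounts for all Jordan curves; what it costs is the local ``twist'' verification you flag at the end, which indeed holds because the coloured components meet the axis precisely in the intervals $[2\ell-1,2\ell]$, so the outer and inner bounding arcs of each coloured component join the odd end of one interval to the even end of the other, in every upper family and in the lower rainbow alike. The paper's thickening argument is shorter but leaves exactly this combinatorial content implicit.
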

\begin{proof}
We reverse the collapse from $\rainbowMeander$ to $\collapsedRainbowMeander$.
This replaces the curves of $\collapsedRainbowMeander$ by ``thick'' curves
which are non-intersecting domains in the plain.
The boundary curves of these domains are the Jordan curves of the original
bi-rainbow meander $\rainbowMeander$.
A thickened path is a simply-connected domain, its boundary a single
Jordan curve. A thickened cycle is a deformed ring domain,
its boundary consists of two Jordan curves.
\end{proof}

Note in particular the special case of an isolated point of
$\collapsedRainbowMeander$. Its ``thick'' counterpart is a disk bounded
by a single Jordan curve. Indeed, an isolated point is created
by the innermost arc of an upper family matching the innermost arc of
the lower family and thus forming a Jordan curve.

Let us count again the number of paths of the collapsed bi-rainbow meander.
Each path has two endpoints. These endpoints must be semi-isolated points
of the upper or lower arc collections.
Semi-isolated points are created by the innermost arcs of odd rainbow families.
We find:

\begin{cor}\label{thmRainbowComponentsParity}
The parity of the number $\components(\arcs_1,\ldots,\arcs_n)$ of connected
components (Jordan curves) of the bi-rainbow meander
$\rainbowMeander(\arcs_1,\ldots,\arcs_n)$
is given as half the number of odd rainbow families:
\begin{equation}\label{eqRainbowComponentsParity}
\components(\rainbowMeander) \;\equiv\; \pathComponents(\collapsedRainbowMeander)
\quad \pmod 2,
\end{equation}
where $2\pathComponents$ is the number of odd components of
$(\arcs_1,\ldots,\arcs_n,\arcs)$, $\arcs=\sum_{\ell=1}^n \arcs_\ell$.
\end{cor}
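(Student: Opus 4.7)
The plan is to split the statement into two pieces: the congruence $\components(\rainbowMeander) \equiv \pathComponents(\collapsedRainbowMeander) \pmod 2$ and the identification $2\pathComponents(\collapsedRainbowMeander) = \#\{\text{odd entries of }(\arcs_1,\dots,\arcs_n,\arcs)\}$. The congruence is immediate: reducing the identity $\components = \pathComponents + 2\cycleComponents$ of Theorem~\ref{thmCollapsedRainbowComponents} modulo $2$ kills the cycle term. All the real content is therefore concentrated in the second identity, which I would establish by a double count of the semi-isolated points of $\collapsedRainbowMeander$.

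For the first side of the count, recall that $\collapsedRainbowMeander$ possesses, by construction, exactly one semi-isolated point at the innermost position of the $\ell$-th upper rainbow precisely when $\arcs_\ell$ is odd, and one semi-isolated point at the innermost position of the lower rainbow precisely when $\arcs = \sum_\ell \arcs_\ell$ is odd. Counted with this natural ``upper/lower'' multiplicity, the total number of semi-isolated occurrences equals
\[
s \;:=\; \#\{\ell \in \{1,\dots,n\}: \arcs_\ell \text{ odd}\} \;+\; [\arcs \text{ odd}],
\]
which is precisely the number of odd entries in the tuple $(\arcs_1,\dots,\arcs_n,\arcs)$.

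For the other side of the count, each connected component of $\collapsedRainbowMeander$ is either a cycle (with no endpoint) or a path (with two endpoints). A strand of $\collapsedRainbowMeander$ alternates strictly between upper and lower arcs, so it can terminate only at an innermost arc of an odd rainbow, that is, at a semi-isolated point; conversely, every semi-isolated occurrence has local valence~$1$ and belongs to exactly one path. Summing the contribution~$2$ over all paths therefore gives $2\pathComponents(\collapsedRainbowMeander) = s$, and combining this with the congruence completes the proof. The one delicate point, and the main obstacle to be handled carefully, is the degenerate case in which an upper and a lower semi-isolated point coincide, producing an isolated point that is conventionally counted as a single path; the resolution is to count the upper and lower occurrences separately rather than as distinct points in the plane, so that both ``degenerate endpoints'' of the isolated path are consumed and the identity $2\pathComponents = s$ survives unchanged.
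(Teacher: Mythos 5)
Your proposal is correct and follows essentially the same route as the paper: reduce Theorem~\ref{thmCollapsedRainbowComponents} modulo $2$ and then count path endpoints, which are exactly the semi-isolated points arising from odd rainbow families (upper and lower). Your explicit treatment of the coincident upper/lower semi-isolated point as an isolated path consuming two occurrences is precisely the convention the paper adopts, just spelled out more carefully.
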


Note that $\arcs$ is odd if, and only if, the number of odd entries among
$(\arcs_1,\ldots,\arcs_n)$ is odd.
Thus, the number of odd components of $(\arcs_1,\ldots,\arcs_n,\arcs)$
is always even.

\begin{cor}\label{thmConnectedRainbowParity}
In particular,
a connected bi-rainbow meander $\rainbowMeander(\arcs_1,\ldots,\arcs_n)$
(given by a single Jordan curve)
must have exactly one or two odd entries among $(\arcs_1,\ldots,\arcs_n)$.
\end{cor}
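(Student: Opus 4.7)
The plan is to derive this as a direct arithmetic consequence of the preceding Theorem \ref{thmCollapsedRainbowComponents} together with Corollary \ref{thmRainbowComponentsParity}; no new geometric content is needed.

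First, I would unpack the connectedness hypothesis. By assumption, $\components(\rainbowMeander)=1$. Substituting into the identity
\[
\components(\rainbowMeander) \;=\; \pathComponents(\collapsedRainbowMeander)
 + 2\cycleComponents(\collapsedRainbowMeander)
\]
from Theorem \ref{thmCollapsedRainbowComponents}, the nonnegativity of both summands on the right forces
$\pathComponents(\collapsedRainbowMeander)=1$ and $\cycleComponents(\collapsedRainbowMeander)=0$.
In other words, a connected bi-rainbow meander must collapse to a single path with no additional cycles.

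Next, I would feed $\pathComponents(\collapsedRainbowMeander)=1$ into Corollary \ref{thmRainbowComponentsParity}, which identifies $2\pathComponents(\collapsedRainbowMeander)$ with the number of odd entries in the augmented tuple $(\arcs_1,\ldots,\arcs_n,\arcs)$, where $\arcs=\sum_{\ell=1}^n \arcs_\ell$. Hence exactly two entries in this augmented tuple are odd.

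Finally, a short case split on the parity of $\arcs$ converts this into a statement about $(\arcs_1,\ldots,\arcs_n)$ alone. If $\arcs$ is even, then the number of odd entries among $\arcs_1,\ldots,\arcs_n$ is even (by the remark preceding the corollary) and $\arcs$ contributes nothing to the count, so the two odd entries all lie among the $\arcs_\ell$, giving exactly two odd rainbow sizes. If $\arcs$ is odd, then $\arcs$ itself accounts for one of the two odd entries, leaving exactly one odd entry among $\arcs_1,\ldots,\arcs_n$. Either way, the number of odd $\arcs_\ell$ is $1$ or $2$, as claimed. There is no substantial obstacle; the entire argument is bookkeeping on top of the two preceding statements.
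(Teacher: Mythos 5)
Your argument is correct and is essentially the paper's own (implicit) derivation: the corollary is stated as an immediate consequence of Theorem \ref{thmCollapsedRainbowComponents} and Corollary \ref{thmRainbowComponentsParity}, exactly as you use them, with $\components=1$ forcing $\pathComponents=1$, $\cycleComponents=0$, hence two odd entries in the augmented tuple $(\arcs_1,\ldots,\arcs_n,\arcs)$ and thus one or two odd $\arcs_\ell$ according to the parity of $\arcs$. Nothing is missing.
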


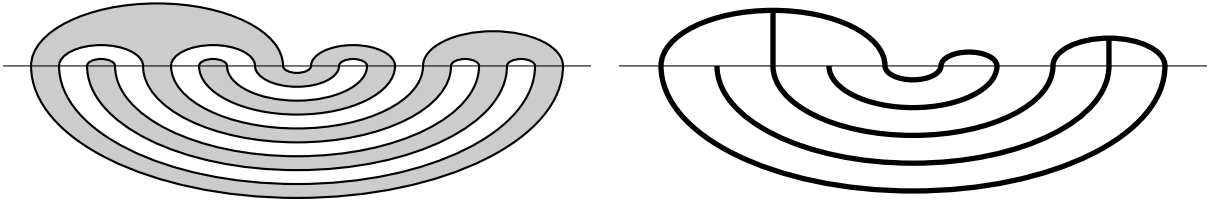
\begin{figure}
\centering
\begin{tikzpicture}[scale=0.99*\hsize/43cm]
\draw [thick, fill=background]
  (1,0) \meanderPath{10,-11,14,-7,8,-13,12,-9,6,-15,20,-1}
  (2,0) \meanderPath{-19,18,-3,4,-17,16,-5,2};
\draw [thin] (0,0) -- (21,0);
\begin{scope}[shift={(22,0)}, thick/.style={line width=0.7mm}]
\draw [thick] (7.5,0) \meanderPath{-13.5,11.5,-9.5,1.5,-19.5,15.5,-5.5}
           -- (5.5,2.0);
\draw [thick] \meanderArc{3.5}{17.5} -- (17.5,1.0);
\draw [thin] (0,0) -- (21,0);
\end{scope}
\end{tikzpicture}
\caption{\label{figCollapsedMeander}
General collapsed meander with branched curves.}
\end{figure}

General meanders can also be collapsed in a similar fashion.
The resulting curves, however, will in general be branched.
See figure \ref{figCollapsedMeander} for the collapse of our
example (\ref{eqBracketFlipExample}).
The connected components of the collapsed meander must then be counted by the
number of components into which the plane is split by the branched curve.
We obtain a result similar to theorem \ref{thmCollapsedRainbowComponents}:

\begin{thm}\label{thmCollapsedGeneralComponents}
The number $\components((\arcs_1^[,\arcs_1^]),\ldots,(\arcs_n^[,\arcs_n^]))$
of connected components (Jordan curves) of the general meander
$\meander((\arcs_1^[,\arcs_1^]),\ldots,(\arcs_n^[,\arcs_n^]))$
equals the number of connected components of the
collapsed meander $\collapsedMeander$, counted by their multiplicity.
Here, the multiplicity of a (possibly branched) curve of $\collapsedMeander$
is given by the number of connected components of its complement
in the plane.
\end{thm}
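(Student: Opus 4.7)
The plan is to adapt the argument of Theorem \ref{thmCollapsedRainbowComponents}: reverse the collapse, thicken each connected component $C$ of $\collapsedMeander$ into a compact planar neighborhood $N(C)$, and recognise the Jordan curves of $\meander$ as the boundary circles of these neighborhoods.

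First I would make the generalised collapse precise. The colouring procedure from the bi-rainbow case applies to an arbitrary meander without modification: alternately colour the regions into which the arcs of $\meander$ cut the upper and lower half planes, starting from infinity. Each coloured region is bounded by finitely many arcs of $\meander$ together with intervals on the axis, and the collapse replaces such a region by an arc (or a point) joining the midpoints of its axial intervals. For bi-rainbow meanders every coloured region is bounded by at most two arcs, so $\collapsedRainbowMeander$ decomposes into unbranched paths and cycles; in the general case, a coloured region bordered by three or more arcs produces a branch point of $\collapsedMeander$, as in figure \ref{figCollapsedMeander}. In either case $\collapsedMeander$ is a finite embedded graph in the plane, and the reverse of the collapse produces, for every connected component $C$, a closed regular neighborhood $N(C) \subset \setR^2$ such that the neighborhoods of distinct components are disjoint and $\bigcup_C \partial N(C)$ is precisely the collection of Jordan curves constituting $\meander$.

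It then suffices, for each connected component $C$, to count the Jordan curves in $\partial N(C)$ and to identify this number with the multiplicity of $C$. This is a standard planar-topology fact which I would verify by Euler characteristics. If $C$ has $V$ vertices and $E$ edges as a graph, then $\chi(N(C)) = \chi(C) = V - E$. On the other hand $N(C)$ is a compact planar (hence genus-zero, orientable) surface with some number $b$ of boundary circles, giving $\chi(N(C)) = 2 - b$. Combined with Euler's formula $V - E + F = 2$ for the connected planar graph $C$, this yields $b = F$, where $F$ is the number of faces of $C$ in the plane, i.e.\ the number of connected components of $\setR^2 \setminus C$. Summing this multiplicity over the components of $\collapsedMeander$ then gives the theorem.

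The main obstacle is the set-up of the collapse in the branched setting rather than the counting step: one has to verify that every arc of $\meander$ belongs to the boundary of exactly one coloured region, so that collapse and thickening are mutually inverse and distinct components of $\collapsedMeander$ thicken disjointly. This follows from the same parity/alternation argument used in the bi-rainbow case, only now applied to a more intricate arrangement of coloured regions. Once these points are settled, the remainder is purely topological, and the formula of Theorem \ref{thmCollapsedRainbowComponents} is recovered as the special case where every component $C$ of $\collapsedRainbowMeander$ is either an embedded arc (multiplicity~$1$) or an embedded circle (multiplicity~$2$).
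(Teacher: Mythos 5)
Your proof is correct and follows essentially the approach the paper intends: it is the reverse-the-collapse/thickening argument used for theorem \ref{thmCollapsedRainbowComponents}, which the paper only invokes by analogy here without writing a separate proof for the branched case. Your Euler-characteristic bookkeeping, $b=2-\chi(N(C))=2-(V-E)=F$, identifying the boundary circles of a thickened component with the faces (complement components) of its branched spine, supplies precisely the step the paper leaves implicit; it recovers the multiplicities $1$ and $2$ for paths and cycles of the bi-rainbow case and is consistent with the cited reformulation $\components(\meander)=\components(\collapsedMeander)+\components(\setR^2\setminus\collapsedMeander)-1$.
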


A similar construction is used in \cite{CautosJacksonn2003-TemperleyLieb}
to relate meanders and their Temperley-Lieb counterparts to planar partitions.
In fact, its inverse is used to represent a planar partitions by a
Temperley-Lieb algebra.
Theorem \ref{thmCollapsedGeneralComponents} is found in the form
\[
  \components(\meander) \;=\;
  \components(\collapsedMeander)
  + \components(\setR^2\setminus\collapsedMeander) - 1.
\]
In other words, the number of Jordan curves of the meander equals the number
of coloured and bounded uncoloured regions.

% ======================================================================

\section{Nose retractions of bi-rainbow meanders\label{secNoseRetractions}}

Let $\rainbowMeander=(\arcs_1,\ldots,\arcs_n)$ be again an arbitrary bi-rainbow
meander with $n$ rainbow families of given numbers of arcs above and
one rainbow of $\arcs=\sum_{k=1}^n \arcs_k$ arcs below the horizontal line,
see figure \ref{figRainbowMeander}.
In this section, we discuss deformations of the meander $\rainbowMeander$ which
result again in a bi-rainbow meander with the same number
$\components(\rainbowMeander) = \components(\arcs_1,\ldots,\arcs_n)$
of connected components.
The general idea is to retract parts of upper rainbow families,
which we call \emph{noses}, through the horizontal axis.

Note, how the retraction of a single arc through the horizontal axis removes
two intersection points.
In the PDE application of section \ref{secMeanderShooting},
this corresponds to a saddle-node bifurcation in which the associated
stationary solutions of the PDE disappear.

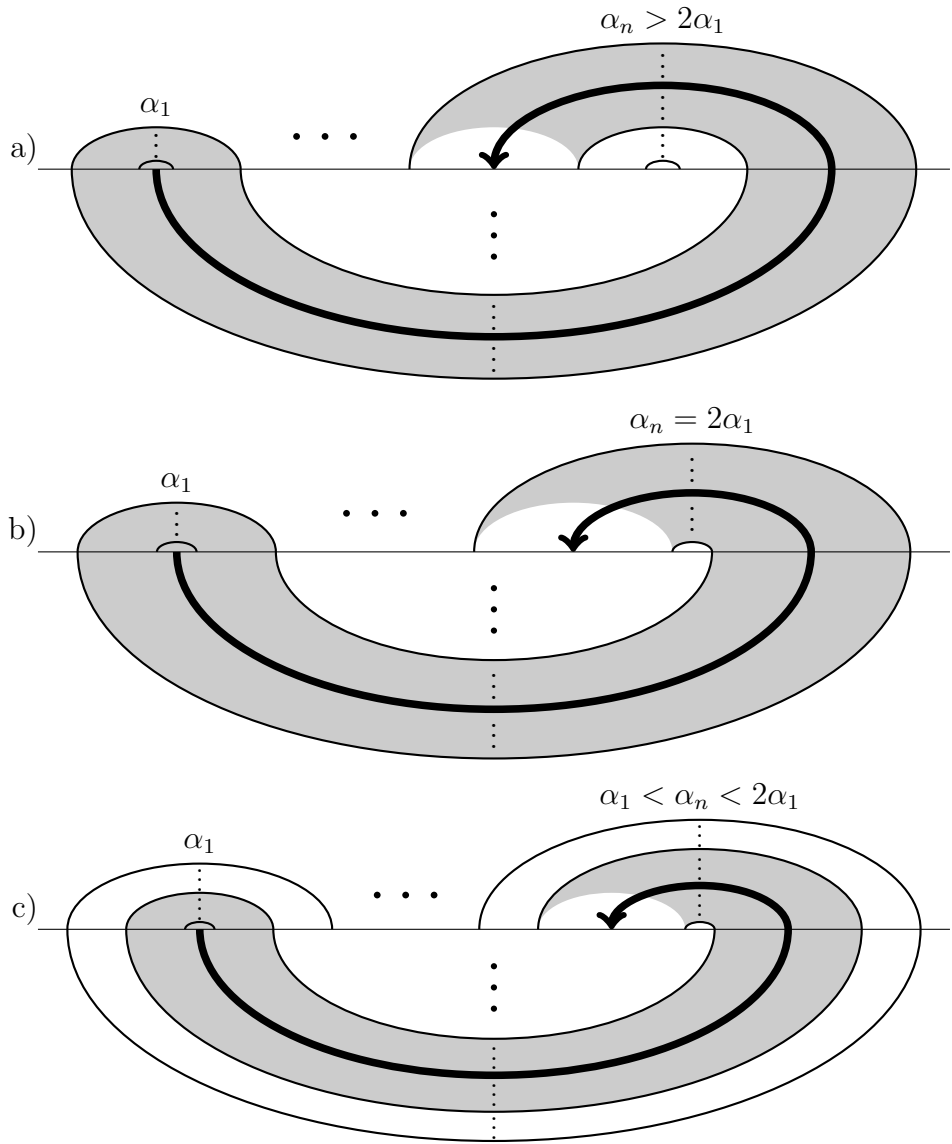
\begin{figure}
\centering
\begin{tikzpicture}[scale=0.75*\hsize/27cm]
\node at (0,0) {\makebox(0,0)[br]{a)\strut}};
% nose background
\fill [background] (11,0) \meanderPath{16,21,-6,1,-26,11};
% horizontal
\draw [thin] (0,0) -- (27,0);
% meander
\drawMeanderPath{16}{21,-6,1,-26,11}
\drawMeanderArc{4}{3}
\drawMeanderArc{19}{18}
% arrow
\draw [line width=1mm, ->] (3.5,0) \meanderPath{-23.5,13.5};
% labels
\node [above] at ( 3.5,1.25) {$\arcs_1$};
\node [above] at (18.5,3.75) {$\arcs_{n} > 2\arcs_1$};
\node at ( 3.5, 0.75 ) {$\vdots$\strut};
\node at (18.5, 0.75 ) {$\vdots$\strut};
\node at (18.5, 1.875) {$\vdots$\strut};
\node at (18.5, 3.125) {$\vdots$\strut};
\node at (13.5,-5.625) {$\vdots$\strut};
\node at (13.5,-4.375) {$\vdots$\strut};
\node at (13.5,-1.875) {\scalebox{2}{$\vdots$\strut}};
\node at ( 8.5, 1    ) {\scalebox{2}{$\cdots$\strut}};
\end{tikzpicture}
\begin{tikzpicture}[scale=0.75*\hsize/23cm]
\node at (0,0) {\makebox(0,0)[br]{b)\strut}};
% nose background
\fill [background] (11,0) \meanderPath{16,17,-6,1,-22,11};
% horizontal
\draw [thin] (0,0) -- (23,0);
% meander
\drawMeanderPath{16}{17,-6,1,-22,11}
\drawMeanderArc{4}{3}
% arrow
\draw [line width=1mm, ->] (3.5,0) \meanderPath{-19.5,13.5};
% labels
\node [above] at ( 3.5,1.25) {$\arcs_1$};
\node [above] at (16.5,2.75) {$\arcs_{n} = 2\arcs_1$};
\node at ( 3.5, 0.75 ) {$\vdots$\strut};
\node at (16.5, 0.875) {$\vdots$\strut};
\node at (16.5, 2.125) {$\vdots$\strut};
\node at (11.5,-4.625) {$\vdots$\strut};
\node at (11.5,-3.375) {$\vdots$\strut};
\node at (11.5,-1.375) {\scalebox{2}{$\vdots$\strut}};
\node at ( 8.5, 1    ) {\scalebox{2}{$\cdots$\strut}};
\end{tikzpicture}
\begin{tikzpicture}[scale=0.75*\hsize/31cm]
\node at (0,0) {\makebox(0,0)[br]{c)\strut}};
% nose background
\fill [background] (17,0) \meanderPath{22,23,-8,3,-28,17};
% horizontal
\draw [thin] (0,0) -- (31,0);
% meander
\drawMeanderPath{22}{23,-8,3,-28,17}
\drawMeanderPath{15}{30,-1,10}
\drawMeanderArc{6}{5}
% arrow
\draw [line width=1mm, ->] (5.5,0) \meanderPath{-25.5,19.5};
% labels
\node [above] at ( 5.5,2.25) {$\arcs_1$};
\node [above] at (22.5,3.75) {$\arcs_1 < \arcs_{n} < 2\arcs_1$};
\node at ( 5.5, 0.75 ) {\scalebox{0.9}{$\vdots$\strut}};
\node at ( 5.5, 1.75 ) {\scalebox{0.9}{$\vdots$\strut}};
\node at (22.5, 0.875) {\scalebox{0.9}{$\vdots$\strut}};
\node at (22.5, 2.125) {\scalebox{0.9}{$\vdots$\strut}};
\node at (22.5, 3.25 ) {\scalebox{0.9}{$\vdots$\strut}};
\node at (15.5,-5.625) {\scalebox{0.9}{$\vdots$\strut}};
\node at (15.5,-4.375) {\scalebox{0.9}{$\vdots$\strut}};
\node at (15.5,-6.75 ) {\scalebox{0.9}{$\vdots$\strut}};
\node at (15.5,-1.875) {\scalebox{2}{$\vdots$\strut}};
\node at (12.5, 1.2  ) {\scalebox{2}{$\cdots$\strut}};
\end{tikzpicture}
\caption{\label{figOuterRetraction}
Outer nose retractions of bi-rainbow meanders.}
\end{figure}

\begin{lem}[Outer nose retraction]\label{thmOuterNoseRetraction}
The number $\components(\arcs_1,\ldots,\arcs_n)$ of connected components of the
bi-rainbow meander $\rainbowMeander(\arcs_1,\ldots,\arcs_n)$ yields:
\begin{equation}\label{eqOuterNoseRetraction}
\begin{array}{l}
\components(\arcs_1,\arcs_2,\ldots,\arcs_{n-1},\arcs_n) \; = \\ \qquad =
\left\{\begin{array}{lll}
  \components(\arcs_2,\ldots,\arcs_{n-1}) + \arcs_1,
    & \arcs_1 = \arcs_n, & (a) \\
  \components(2\arcs_1-\arcs_n,\arcs_2,\ldots,\arcs_{n-1},\arcs_1),
    & \arcs_1 < \arcs_n < 2\arcs_1, & (b) \\
  \components(\arcs_2,\ldots,\arcs_{n-1},\arcs_1),
    & 2\arcs_1 = \arcs_n, & (c) \\
  \components(\arcs_2,\ldots,\arcs_{n-1},\arcs_1, \arcs_n-2\arcs_1), \qquad
    & 2\arcs_1 < \arcs_n. & (d)
\end{array}\right.
\end{array}\hspace*{-1em}
\end{equation}
By reflection, $\components(\arcs_1,\arcs_2,\ldots,\arcs_{n-1},\arcs_n)
  = \components(\arcs_n,\arcs_{n-1},\ldots,\arcs_2,\arcs_1)$,
the case $\arcs_n < \arcs_1$ is included.
\end{lem}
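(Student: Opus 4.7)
The plan is to work directly with the meander permutation $\pi^\arcsboth$ from Section~\ref{secMeanderPermutation}, whose cycles are in bijection with the connected components of the bi-rainbow meander. The four cases split into two qualitatively different types: case (a) is proved by isolating $\arcs_1$ trivial four-cycles, while cases (b), (c), (d) are proved uniformly by constructing an index bijection $\phi$ between the positions of the retracted bi-rainbow meander and the ``surviving'' positions of the original, and showing that cycles of $\pi^\arcsboth$ on the original, after contraction over the deleted positions, correspond bijectively to cycles of $\pi^\arcsboth$ on the retracted meander.

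I would begin with case (a), where $\arcs_1=\arcs_n$. A direct algebraic computation using $\pi^\arcsbelow(k)=2\arcs-k+1$ together with the nesting formula for the leftmost and rightmost rainbows shows that for every $i\in\{1,\ldots,\arcs_1\}$ the $\pi^\arcsboth$-orbit of $i$ is exactly the four-cycle $\bigl(i,\;2\arcs_1-i+1,\;2\arcs-2\arcs_1+i,\;2\arcs-i+1\bigr)$, supported entirely on the $4\arcs_1$ positions of the leftmost and rightmost rainbows. These $\arcs_1$ distinct four-cycles account for $\arcs_1$ Jordan curves. The remaining positions $\{2\arcs_1+1,\ldots,2\arcs-2\arcs_1\}$, after relabelling by $-2\arcs_1$, carry precisely the upper rainbow families $\arcs_2,\ldots,\arcs_{n-1}$ and a lower rainbow of size $\arcs-2\arcs_1$; the restricted permutation therefore equals the meander permutation of $\rainbowMeander(\arcs_2,\ldots,\arcs_{n-1})$ and contributes $\components(\arcs_2,\ldots,\arcs_{n-1})$ further cycles, establishing (a).

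For cases (b), (c), (d), I propose to specify $\phi$ by reading off Figure~\ref{figOuterRetraction}: the leftmost rainbow is slid through the axis to become the last (in (d)) or the penultimate (in (b), (c)) rainbow of the retracted meander, and a specific block of old upper and lower arcs is absorbed in the process --- $2\arcs_1$ arcs in cases (c), (d) and $2(\arcs_n-\arcs_1)$ arcs in case (b). The essential verification is local: starting from any image position $a=\phi(a')$, I would check that following $\pi^\arcsboth$ on the original meander eventually returns to an image position $b=\phi(b')$ after traversing a short detour (of length one or two arcs) through the deleted positions, and that $\pi^\arcsboth$ of the retracted meander sends $a'$ directly to $b'$. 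Once this is verified for the finitely many types of detour that occur, a bijection on cycles follows and no additive term appears, which gives (b), (c), (d).

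The main obstacle will be case (b). Here the retracted meander carries a genuinely new leftmost rainbow of size $2\arcs_1-\arcs_n$ that is not a simple relabelling of a contiguous block of old positions, so the image of $\phi$ splits across several non-adjacent intervals, and the detour arithmetic must be handled separately in the new leftmost region and the new rightmost region. Cases (c) and (d) are by comparison routine: in (d) the nose lands strictly inside the outer $\arcs_1$ arcs of the rightmost rainbow, so $\phi$ is almost a translation with only the outermost $\arcs_1$ arcs of rainbow $n$ and the outermost $\arcs_1$ lower arcs deleted, and in (c) the same description degenerates since the inner part of rainbow $n$ is empty. I would therefore treat (d) first as a template for the ``detours have length three'' pattern, handle (c) as its boundary case, and only then address (b), where an additional reshuffling appears but the same detour principle applies.
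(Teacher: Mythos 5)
Your route is genuinely different from the paper's. The paper treats the nose retraction as an isotopy of the Jordan curves themselves (the first upper family is pushed through the axis, and the resulting configuration is read off as the bi-rainbow meander on the right-hand side of (\ref{eqOuterNoseRetraction})), so invariance of the number of closed curves is automatic and the whole proof is pictorial; you instead re-prove the invariance on the level of the meander permutation $\pi^\arcsboth$ by an explicit position bijection and a cycle-contraction argument. Your case (a) is complete and correct: the quadruples $(i,\,2\arcs_1-i+1,\,2\arcs-2\arcs_1+i,\,2\arcs-i+1)$, $1\le i\le\arcs_1$, are exactly the $\arcs_1$ outer curves, and the restriction to the surviving positions, shifted by $2\arcs_1$, is the meander permutation of $\rainbowMeander(\arcs_2,\ldots,\arcs_{n-1})$.

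For cases (b)--(d), however, the data you read off figure \ref{figOuterRetraction} is not right, and the bijection $\phi$ as you describe it would not verify. In case (d) the deleted positions are $\{1,\ldots,2\arcs_1\}$ together with the rightmost $2\arcs_1$ positions; what is absorbed is all of family $1$, the outermost $2\arcs_1$ (not $\arcs_1$) lower arcs and the outermost $2\arcs_1$ (not $\arcs_1$) arcs of family $n$, merged five arcs at a time: for each $1\le i\le\arcs_1$ the path through the deleted positions $2\arcs+1-i$, $i$, $2\arcs_1+1-i$, $2\arcs-2\arcs_1+i$, joining the surviving left feet $2\arcs-2\arcs_n+i$ and $2\arcs-2\arcs_n+2\arcs_1+1-i$ of family $n$, is replaced by a single new arc. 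So the detours consist of five arcs through four deleted points, not ``one or two'' or ``three'', and the re-emerged family of size $\arcs_1$ consists of genuinely new arcs sitting on surviving left feet of the old family $n$ --- precisely the phenomenon you expected only in case (b); it is not ``almost a translation'' of old arcs of rainbow $n$. In case (b) the same five-arc paths occur (with $i$ ranging over the innermost $\arcs_n-\arcs_1$ arcs of family $1$), the surviving positions split into three intervals, and the new last family of size $\arcs_1$ mixes the surviving outer $2\arcs_1-\arcs_n$ arcs of family $n$ with the $\arcs_n-\arcs_1$ new arcs nested inside them. Once these counts are corrected, your contraction argument does go through and yields a rigorous combinatorial counterpart of the paper's figure-based proof; but as written, the decisive verification for (b)--(d) is only announced, and the announced bookkeeping is the part that is wrong, so the proposal is not yet a proof.
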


\begin{proof}
In case (a), $\arcs_1 = \arcs_n$,
we remove $\arcs_1$ outer cycles,
i.e.~connected components of the meander.

In case (d), $2\arcs_1 \le \arcs_n$,
we retract the full first (leftmost) upper rainbow family of size $\arcs_1$,
as shown in figure \ref{figOuterRetraction}a.
We hit the right half of the last (rightmost) upper rainbow family and retract
further until the retracted nose of size $\arcs_1$ arrives left of the
remaining $\arcs_n-2\arcs_1$ arcs of the last rainbow family.
In the boundary case (c), $2\arcs_1 = \arcs_n$, nothing remains of last
rainbow family, see figure \ref{figOuterRetraction}b.

In case (b), $\arcs_1 < \arcs_n < 2\arcs_1$,
we retract only the inner part of the first rainbow family,
such that we just hit the innermost arc of the last rainbow family,
as shown in figure \ref{figOuterRetraction}c.
Thereby, after retraction, the last rainbow family will remain
a (non-branched) rainbow family.
To hit the innermost arc, the retracted nose must consist
of $\arcs_n-\arcs_1$ arcs.
Therefore,
$\arcs_1-(\arcs_n-\arcs_1)=2\arcs_1-\arcs_n$ arcs of the first rainbow family
and $\arcs_n-(\arcs_n-\arcs_1)=\arcs_1$ arcs of the last rainbow family remain.
\end{proof}

\begin{figure}
\centering
\begin{tikzpicture}[ scale=0.5 ]\small
\begin{scope}[ shift={(0,0)} ]
\draw [thick] (2,6)--(2,2)--(0,2)--(0,0)--(2,0)--(2,2)--(5,2)--(5,5)--(7,5);
\draw [thin, dotted] (0,0)--(6,6);
\draw [thick, dashed] (2,0)--(2,2)--(4,2);
\node [above right] at (0,2) {$\alpha_1$};
\node [below left ] at (2,6) {$\alpha_2$};
\node [below right] at (1,0) {$\alpha_n=\alpha_1$};
\node [below right] at (4,2) {$\alpha_{n-1}$};
\path [pattern=north west lines] (0,0) rectangle ++(2,2);
\end{scope}
\begin{scope}[ shift={(8,0)} ]
\draw [thick] (2,6)--(2,2)--(0,2)--(0,0)--(3,0)--(3,3)--(5,3)--(5,5)--(7,5);
\draw [thin, dotted] (0,0)--(6,6);
\draw [thin, dashed] (2,0)--(2,2);
\draw [thick, densely dashed] (1,2)--(1,1)--(3,1);
\node [above right] at (0,2) {$\alpha_1$};
\node [below left ] at (2,6) {$\alpha_2$};
\node [below right] at (4,3) {$\alpha_{n-1}$};
\node [below right] at (1,0) {$\alpha_1<\alpha_n<2\alpha_1$};
\path [pattern=north west lines] (0,0) rectangle ++(2,2);
\path [pattern=north east lines] (2,0) rectangle ++(1,1);
\path [pattern=grid] (1,1) rectangle ++(1,1);
\end{scope}
\begin{scope}[ shift={(16,0)} ]
\draw [thick] (2,6)--(2,2)--(0,2)--(0,0)--(4,0)--(4,4)--(7,4);
\draw [thin, dotted] (0,0)--(6,6);
\draw [thin, dashed] (2,0)--(2,2);
\draw [thick, densely dashed] (2,2)--(4,2);
\node [above right] at (0,2) {$\alpha_1$};
\node [below left ] at (2,6) {$\alpha_2$};
\node [below right] at (5,4) {$\alpha_{n-1}$};
\node [below right] at (2,0) {$\alpha_n=2\alpha_1$};
\path [pattern=north west lines] (0,0) rectangle ++(2,2);
\path [pattern=north east lines] (2,0) rectangle ++(2,2);
\end{scope}
\begin{scope}[ shift={(24,0)} ]
\draw [thick] (2,6)--(2,2)--(0,2)--(0,0)--(5,0)--(5,5)--(7,5);
\draw [thin, dotted] (0,0)--(6,6);
\draw [thin, dashed] (2,0)--(2,2) (2,3)--(3,3);
\draw [thick, densely dashed] (2,2)--(3,2)--(3,3)--(5,3);
\node [above right] at (0,2) {$\alpha_1$};
\node [below left ] at (2,6) {$\alpha_2$};
%\node [below right] at (2,6) {$>\alpha_n-2\alpha_1$};
%\node [below right] at (6,5) {$\alpha_{n-1}$};
\node [below right] at (3,0) {$\alpha_n>2\alpha_1$};
\path [pattern=north west lines] (0,0) rectangle ++(2,2);
\path [pattern=north east lines] (2,0) rectangle ++(3,3);
\path [pattern=grid] (2,2) rectangle ++(1,1);
\end{scope}
\end{tikzpicture}
\caption{\label{figBilliardOuterRetraction}
Cutting of Cartesian billiards to resemble outer nose retractions.}
\end{figure}
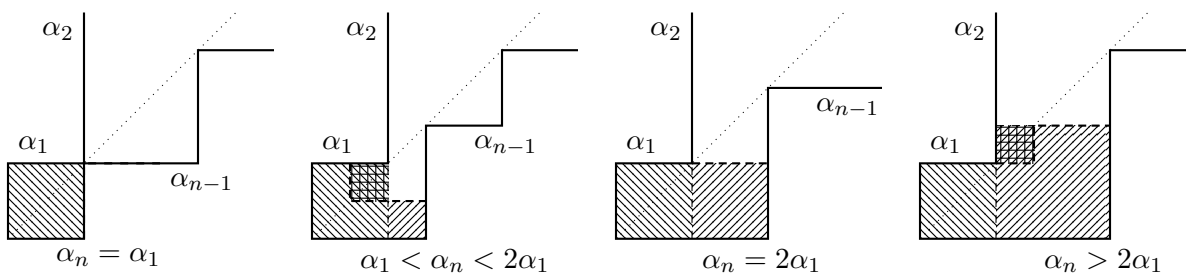

In terms of Cartesian billiards, section \ref{secBilliard},
rainbow families which do not encompass the midpoint are represented
as triangles attached to the diagonal.
Cases (a) and (c) of (\ref{eqOuterNoseRetraction}) can be achieved
by cutting off
squares from the billiard domain,
see figure \ref{figBilliardOuterRetraction}.
The removal of squares which have three sides on the billiard boundary
does not change the connectivity of trajectories.
Indeed the exit point of an arbitrary trajectory on the square
coincides with its entry point, with reflected directions.
Cases (b) and (d) can be represented by the removal of two
and the subsequent attachment of one square.
In case (d), however, we need $\alpha_2>\alpha_n-2\alpha_1$ for the
second square to fit inside the billiard domain;
otherwise, the procedure fails.
The Cartesian billiard benefits at this point from the relation to
meander curves, where the nose retraction is always possible.

We already see that this lemma provides a strict reduction of the meander.
Therefore, its iteration will determine the number of connected components
after finitely many steps.
In section \ref{secAlgorithms}, we will improve the case $2\arcs_1 < \arcs_n$
to find an algorithm of logarithmic complexity.

In lemma \ref{thmSequenceConnected}, below, we will use one of the
inverse operations of (\ref{eqOuterNoseRetraction}),
\begin{equation}\label{eqInverseOuterNoseRetraction}
\components(\arcs_1,\ldots,\arcs_n) \;=\;
  \components(\arcs_n,\arcs_1,\ldots,\arcs_{n-1},2\arcs_n) \;=\;
  \components(2\arcs_n,\arcs_1,\ldots,\arcs_{n-1},2\arcs_n,\arcs_n),
\end{equation}
to construct a particular sequence of connected bi-rainbow meanders.

Instead of retracting the outer noses, as in the lemma above,
we now want to retract an inner nose.
The middle rainbow turns out to be a particular useful choice.
To determine the middle upper rainbow family, we define
\begin{equation}\label{eqMiddleRainbowLR}
\begin{array}{rcl}
L(\ell) &:=& \arcs_1+\arcs_2+\cdots+\arcs_{\ell-1},\\
R(\ell) &:=& \arcs_{\ell+1}+\cdots+\arcs_n,
\end{array}
\end{equation}
the total numbers of arcs left and right of the $\ell$-th upper rainbow family.
The index $m^*$ of the middle rainbow is now given as the unique value,
such that
\begin{equation}\label{eqMiddleRainbow}
L^* = L(m^*) < \arcs/2 \le L(m^*+1).
\end{equation}
For later reference, we note
\begin{equation}\label{eqMiddleRainbowR}
R^* \;=\; R(m^*) \le \arcs/2,
\qquad L^*+\alpha_{m^*}+R^* \;=\; \arcs.
\end{equation}

\begin{lem}[Inner nose retraction]\label{thmInnerNoseRetraction}
The number $\components(\arcs_1,\ldots,\arcs_n)$ of connected components of the
bi-rainbow meander $\rainbowMeander(\arcs_1,\ldots,\arcs_n)$, with $L^*, R^*, m^*$ as above,
yields:
\begin{equation}\label{eqInnerNoseRetraction}
\begin{array}{l}
\components(\arcs_1,\ldots,\arcs_n) \; = \\ \quad =
\left\{\begin{array}{lll}
\components(\arcs_1,\ldots,\arcs_{m^*-1},\arcs_{m^*+1},\ldots,\arcs_n)
  + \arcs_{m^*},
  & |L^*\!-\!R^*| = 0, & (a) \\
\components(\arcs_1,\ldots,\arcs_{m^*-1},\arcs_{m^*+1},\ldots,\arcs_n),
  & |L^*\!-\!R^*| = \arcs_{m^*}, & (b) \\
\components(\arcs_1,\ldots,\arcs_{m^*-1},\arcs_{m^*}-|L^*\!-\!R^*|,
  \arcs_{m^*+1},\ldots,\arcs_n),
  & \mbox{otherwise}. & (c)
\end{array}\right.
\end{array}\hspace*{-1em}
\end{equation}
\end{lem}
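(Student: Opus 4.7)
My plan is to exhibit an explicit local contraction on the 2-regular graph of arcs which preserves the number of connected components and reduces the meander to the claimed form. The basic move takes an upper arc $(a,b)$ together with the lower arcs $(a,a')$ and $(b,b')$ meeting its endpoints, removes all three edges and the vertices $a,b$, and inserts a single new lower arc $(a',b')$. If instead $a'=b$, so that the arc $(a,b)$ is both upper and lower, the two edges on $\{a,b\}$ form a 2-cycle, i.e.\ an entire closed Jordan curve of the meander, which we count as one component. In both sub-cases the total number of cycles of the graph is preserved.

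I will apply this move to selected arcs of the middle rainbow. The $i$-th arc of family $m^*$ has endpoints $2L^*+i$ and $2L^*+2\arcs_{m^*}-i+1$; the lower-rainbow partner of $2L^*+i$ is $2\arcs-2L^*-i+1 = 2R^*+2\arcs_{m^*}-i+1$. A direct comparison shows that the two endpoints of the $i$-th middle arc are lower partners of each other if and only if $L^*=R^*$, which drives the case distinction. In case~(a), $L^*=R^*$, every arc of the middle rainbow forms a 2-cycle with the lower rainbow arc at the same positions. Removing the resulting $\arcs_{m^*}$ closed Jordan curves together with the innermost $\arcs_{m^*}$ lower rainbow arcs, and relabeling the remaining $2(\arcs-\arcs_{m^*})$ axis points consecutively, leaves $\rainbowMeander(\arcs_1,\ldots,\arcs_{m^*-1},\arcs_{m^*+1},\ldots,\arcs_n)$, yielding the claimed $+\arcs_{m^*}$ summand.

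For cases~(b) and~(c), by the reflection $\rainbowMeander(\arcs_1,\ldots,\arcs_n)=\rainbowMeander(\arcs_n,\ldots,\arcs_1)$ I may assume $L^*<R^*$, and set $k=R^*-L^*=|L^*-R^*|$, so $1\le k\le\arcs_{m^*}$. I will contract the innermost $k$ arcs of the middle rainbow one at a time; because $L^*\ne R^*$, none of the contractions is degenerate. The $2k$ removed vertices form the central block $[2L^*+\arcs_{m^*}-k+1,\,2L^*+\arcs_{m^*}+k]$ of the middle rainbow. The upper families $\arcs_1,\ldots,\arcs_{m^*-1}$ and $\arcs_{m^*+1},\ldots,\arcs_n$ survive unchanged, and the remaining outer middle arcs form a rainbow family of size $\arcs_{m^*}-k$ (which is empty exactly in case~(b)).

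The heart of the argument is to check that the $k$ new lower arcs $(2R^*+i,\,2R^*+2\arcs_{m^*}-i+1)$, together with the surviving outer parts of the original lower rainbow, form a single rainbow of $\arcs-k$ arcs on the new $(2\arcs-2k)$-point axis. Under consecutive relabeling of the surviving positions, a direct index calculation transforms each new arc into $(2L^*+i,\,2L^*+2\arcs_{m^*}-i+1)$, which fits precisely between the surviving left and right halves of the old lower rainbow. This fit works for $k=R^*-L^*$ and for no other value: a smaller $k$ leaves remaining vertices without a lower-arc neighbor, while a larger $k$ would place new-arc endpoints inside the removed block. This bookkeeping is the main obstacle I expect, but once it is carried out the reduced meander is exactly $\rainbowMeander(\arcs_1,\ldots,\arcs_{m^*-1},\arcs_{m^*}-k,\arcs_{m^*+1},\ldots,\arcs_n)$, and the lemma follows because the contraction preserves components.
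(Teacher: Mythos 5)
Your proof is correct and follows essentially the same route as the paper: in every case you remove exactly the innermost $|L^*\!-\!R^*|$ arcs of the middle family (respectively identify them as $\arcs_{m^*}$ closed cycles when $L^*=R^*$), and your cycle-preserving contraction of an upper arc together with its two lower neighbours is precisely the combinatorial counterpart of the paper's retraction of that nose through the axis. The only difference is presentational: you verify by explicit index bookkeeping (including the relabeling that turns the new lower arcs into innermost arcs of the reduced lower rainbow) what the paper argues by the planar deformation picture of figure \ref{figInnerRetraction}.
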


\begin{figure}
\centering
\begin{tikzpicture}[scale=0.75*\hsize/23cm]
% nose background
\fill [background] (9,0) \meanderPath{-14,-15,20,-9};
% horizontal
\draw [thin] (6,0) -- (29,0);
% meander
\drawMeanderPath{14}{-15,20,-9}
\drawMeanderArc{24}{11}
\drawMeanderArc{18}{17}
% arrow
\draw [line width=1mm, ->] (17.5,0) \meanderPath{-11.5};
% labels
\node [above] at (17.5,3.25) {$\arcs_{m^*}$};
\node at (17.5, 0.725) {$\vdots$\strut};
\node at (17.5, 2.25 ) {$\vdots$\strut};
\node at (14.5,-0.875) {$\vdots$\strut};
\node at (14.5,-2.125) {$\vdots$\strut};
\node at (14.5,-3.375) {\makebox(0,0){$\vdots$\strut}};
\node at ( 8  , 1.5  ) {\scalebox{2}{$\cdots$\strut}};
\node at (27  , 1.5  ) {\scalebox{2}{$\cdots$\strut}};
\end{tikzpicture}
\caption{\label{figInnerRetraction}
Inner nose retraction of a bi-rainbow meander.}
\end{figure}
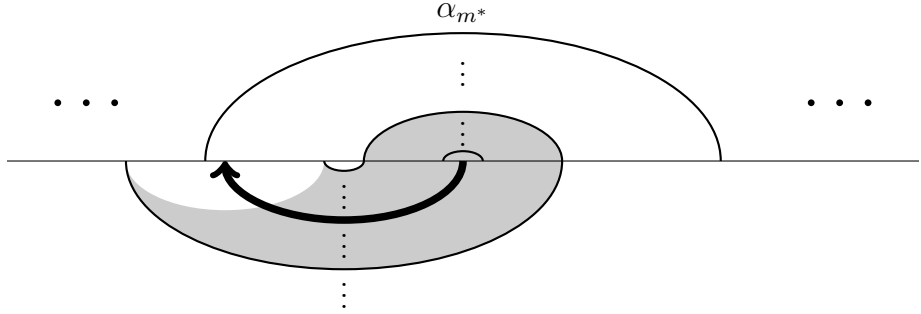

\begin{proof}
If $L^*=R^*$, then the middle family forms $m^*$ closed cycles, as each arc of
the family matches an arc of the lower family.

Otherwise, we retract the inner part of the $m^*$-th upper rainbow family,
such that we just hit the innermost arc of the lower rainbow family,
as in figure \ref{figInnerRetraction}. To achieve this, the
retracted nose must consist of $|L^*-R^*|$ arcs.
Then, $\arcs_{m^*}-|L^*-R^*|$ arcs remain in the middle upper rainbow family.
The lower family remains a (non-branched) rainbow family.

In the special case $|L^*-R^*| = \arcs_{m^*}$,
this procedure retracts the full $m^*$-th rainbow family.
In fact, in this case, $R^*=\arcs/2$ and the midpoint lies between the
$m^*$-th and its right neighbouring family.
Either of both families could be removed.
\end{proof}

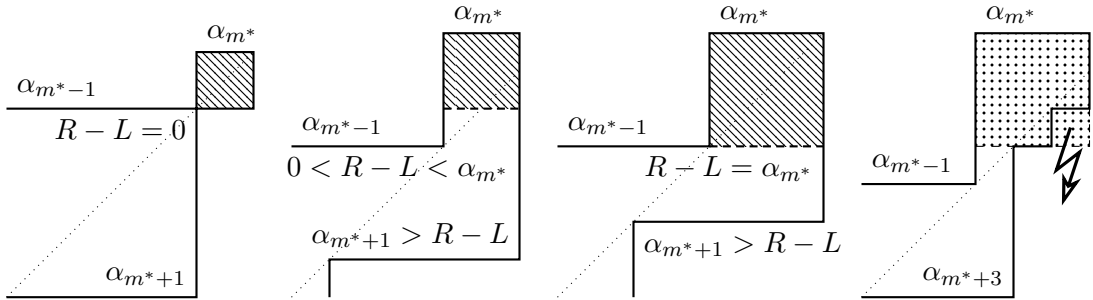
\begin{figure}
\centering
\begin{tikzpicture}[ scale=0.5 ]\small
\begin{scope}[ shift={(-6.5,1)} ]
\draw [thick] (-5,0)--(1.5,0)--(1.5,1.5)--(0,1.5)--(0,-5)--(-5,-5);
\draw [thin, dotted] (-5,-5)--(1.5,1.5);
\node [above right] at (0,1.5) {$\alpha_{m^*}$};
\node [above right] at (-5, 0) {$\alpha_{m^*-1}$};
\node [above left ] at ( 0,-5) {$\alpha_{m^*+1}$};
\node [below left ] at ( 0, 0) {$R-L=0$};
\path [pattern=north west lines] (0,0) rectangle ++(1.5,1.5);
\end{scope}
\begin{scope}[ shift={(0,0)} ]
\draw [thick] (-4,0)--(0,0)--(0,3)--(2,3)--(2,-3)--(-3,-3)--(-3,-4);
\draw [thin, dotted] (-4,-4)--(2,2);
\draw [thick, densely dashed] (0,1)--(2,1);
\node [above right] at ( 0, 3) {$\alpha_{m^*}$};
\node [above right] at (-4, 0) {$\alpha_{m^*-1}$};
\node [above left ] at ( 2,-3) {$\alpha_{m^*+1}>R-L$};
\node [below left ] at ( 2, 0) {$0<R-L<\alpha_{m^*}$};
\path [pattern=north west lines] (0,1) rectangle ++(2,2);
\end{scope}
\begin{scope}[ shift={(7,0)} ]
\draw [thick] (-4,0)--(0,0)--(0,3)--(3,3)--(3,-2)--(-2,-2)--(-2,-4);
\draw [thin, dotted] (-4,-4)--(3,3);
\draw [thick, densely dashed] (0,0)--(3,0);
\node [above right] at ( 0, 3) {$\alpha_{m^*}$};
\node [above right] at (-4, 0) {$\alpha_{m^*-1}$};
\node [below right] at (-2,-2) {$\alpha_{m^*+1}>R-L$};
\node [below left ] at ( 3, 0) {$R-L=\alpha_{m^*}$};
\path [pattern=north west lines] (0,0) rectangle ++(3,3);
\end{scope}
\begin{scope}[ shift={(14,-1)} ]
\draw [thick] (-3,0)--(0,0)--(0,4)--(3,4)--(3,2)--(2,2)--(2,1)--(1,1)--(1,-3)
            --(-3,-3);
\draw [thin, dotted] (-3,-3)--(3,3);
\draw [thin, dashed] (0,1)--(3,1)--(3,2);
\node [above right] at ( 0, 4) {$\alpha_{m^*}$};
\node [above right] at (-3, 0) {$\alpha_{m^*-1}$};
\node [above left ] at ( 1,-3) {$\alpha_{m^*+3}$};
\path [pattern=dots] (0,1) rectangle ++(3,3);
\draw [ shift={(2.5,1.5)}, scale=0.5ex/1cm, x={(1,-0.3)}, y={(0.3,1)}, very thick ] (0,0)
         -- ++(0,-15) -- ++(5,8) -- ++(0,-12) ++(-1,1) -- ++(-1,1)
         -- ++(2,-7) -- ++(2,7) -- ++(-2,-2) -- ++(-1,1);
\end{scope}
\end{tikzpicture}
\caption{\label{figBilliardInnerRetraction}
Cutting of Cartesian billiards to resemble inner nose retractions.}
\end{figure}

We can again try to rephrase the inner nose retraction in terms of
Cartesian billiards, section \ref{secBilliard}.
Note, that the middle rainbow $m^*$ contains the upper arcs which encompass
the midpoint.
It is the only rainbow family which is not represented by a triangle
over the diagonal.
We find simple cuts of single squares,
see figure \ref{figBilliardInnerRetraction}.
Cases (b) and (c) require, however, that the new middle family
is the old one or a direct neighbour.
Otherwise, i.e.~if the neighbouring family is too small,
there is no full square available,
as seen in the last picture of figure \ref{figBilliardInnerRetraction}.
Again, the meander view point is the preferred one.

Note also the inverse operation of (\ref{eqInnerNoseRetraction})(c).
For arbitrary $\ell$ we find
\begin{equation}\label{eqInverseInnerNoseRetraction}
\components(\arcs_1,\ldots,\arcs_n) \;=\;
  \components(\arcs_1,\ldots,\arcs_{\ell-1},\arcs_\ell+|L(\ell)-R(\ell)|,
    \arcs_{\ell+1},\ldots,\arcs_n).
\end{equation}
Indeed, the $\ell$-th family of size $\arcs_\ell+|L(\ell)-R(\ell)|$ becomes
the middle rainbow family $m^*$ on the right-hand side,
without changing $L$ and $R$.

% ======================================================================

\section{Non-/existing gcd formulae\label{secNoGcd}}

In this section, we try to express the number
$\components(\rainbowMeander) = \components(\arcs_1,\ldots,\arcs_n)$
of connected components of a bi-rainbow meander
$\rainbowMeander=(\arcs_1,\ldots,\arcs_n)$
by the greatest common divisor of expressions in $\arcs_\ell$.
Indeed, for $n\le3$:
\begin{equation}\label{eqGcdOfRainbowsAdv}
\begin{array}{lcl}
\components(\arcs_1) &=& \arcs_1, \\
\components(\arcs_1,\arcs_2) &=& \gcd(\arcs_1, \arcs_2), \\
\components(\arcs_1,\arcs_2,\arcs_3) &=& \gcd(\arcs_1+\arcs_2,\arcs_2+\arcs_3),
\end{array}
\end{equation}
see theorem \ref{thmGcdOfRainbows} below.

Before we show that there do not exist similar expressions of $\components$
for $n\ge 4$ in theorem \ref{thmNoGcd},
we establish a particular family of examples and a scaling property of
$\components$ in the following two preparatory lemmata.

\begin{lem}[Sequence of connected meanders]\label{thmSequenceConnected}
Let $\tilde{\arcs} \ge 2$, $\arcs^* \ge 1$, and $\tilde{n}\ge 0$
be arbitrary integers.
Then the bi-rainbow meanders
\begin{equation}\label{eqSequenceConnected}
\begin{array}{lll}
\rainbowMeander( \underbrace{2\tilde{\arcs},\ldots,2\tilde{\arcs}}_{\tilde{n}},
   \tilde{\arcs}-1, \arcs^*,
   \underbrace{2\tilde{\arcs},\ldots,2\tilde{\arcs}}_{\tilde{n}},
   \tilde{\arcs} )
   & , & n = 2\tilde{n}+3 \mbox{ odd}, \\
\rainbowMeander( \tilde{\arcs},
   \underbrace{2\tilde{\arcs},\ldots,2\tilde{\arcs}}_{\tilde{n}},
   \tilde{\arcs}-1, \arcs^*,
   \underbrace{2\tilde{\arcs},\ldots,2\tilde{\arcs}}_{\tilde{n}+1})
   & , & n = 2\tilde{n}+4 \mbox{ even},
\end{array}
\end{equation}
are connected, i.e.~$\components(\rainbowMeander) = 1$.
In particular, the bi-rainbow meander
$\rainbowMeander( \tilde{\arcs}, \tilde{\arcs}-1, \arcs^*, 2\tilde{\arcs} )$
with four upper families is connected.
\end{lem}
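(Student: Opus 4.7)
My plan is to prove the odd-$n$ statement by induction on $\tilde{n}$ using the inverse outer nose retractions from (\ref{eqInverseOuterNoseRetraction}), and then derive the even-$n$ statement by one further step from the odd one at the same $\tilde{n}$. All bookkeeping will keep the number of connected components fixed at $1$.

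For the base case $\tilde{n}=0$ of the odd family, only three upper rainbows remain, and I would invoke the gcd formula (\ref{eqGcdOfRainbowsAdv}) for three families:
\[
\components(\tilde{\arcs}-1,\arcs^*,\tilde{\arcs}) \;=\; \gcd\bigl((\tilde{\arcs}-1)+\arcs^*,\; \arcs^*+\tilde{\arcs}\bigr) \;=\; \gcd(\tilde{\arcs}-1+\arcs^*,\,1) \;=\; 1,
\]
since the two arguments differ by exactly $1$.

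For the inductive step, the key observation I would exploit is that the compound inverse operation
\[
(\arcs_1, \ldots, \arcs_n) \;\longmapsto\; (2\arcs_n,\, \arcs_1, \ldots, \arcs_{n-1},\, 2\arcs_n,\, \arcs_n),
\]
furnished by the second equality in (\ref{eqInverseOuterNoseRetraction}), preserves both $\components$ \emph{and} the value of the trailing entry. Starting from the odd base $(\tilde{\arcs}-1,\arcs^*,\tilde{\arcs})$ and iterating this compound $\tilde{n}$ times --- each step prepending one $2\tilde{\arcs}$ and inserting one $2\tilde{\arcs}$ immediately before the preserved terminal $\tilde{\arcs}$ --- the construction reaches exactly the odd-$n$ sequence of (\ref{eqSequenceConnected}) with $\components=1$. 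This is the main content of the argument; the only subtlety is the observation that the compound fixes the last entry, which is what makes the recursion well-defined, and this follows by direct inspection of the formula.

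Finally, to obtain the even-$n$ statement for a given $\tilde{n}$, I would apply the simple inverse $(\arcs_1,\ldots,\arcs_n) \mapsto (\arcs_n,\arcs_1,\ldots,\arcs_{n-1},2\arcs_n)$ --- the first equality in (\ref{eqInverseOuterNoseRetraction}) --- to the just-constructed odd meander with $n=2\tilde{n}+3$. The trailing $\tilde{\arcs}$ becomes $2\tilde{\arcs}$, merging with the preceding block of $\tilde{n}$ copies of $2\tilde{\arcs}$ into a block of $\tilde{n}+1$ copies, while a new leading $\tilde{\arcs}$ is prepended; the result matches the even-$n$ form of (\ref{eqSequenceConnected}) verbatim, still with $\components=1$. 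The four-family special case $\rainbowMeander(\tilde{\arcs},\tilde{\arcs}-1,\arcs^*,2\tilde{\arcs})$ is exactly this construction at $\tilde{n}=0$, so no separate argument is needed. I do not anticipate a genuine obstacle, as the entire proof reduces to identifying the correct invariant operations on the index sequence.
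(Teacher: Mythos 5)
Your proposal is correct and takes essentially the same route as the paper: the base case $\components(\tilde{\arcs}-1,\arcs^*,\tilde{\arcs})=1$ via the three-family gcd formula (\ref{eqGcdOfRainbowsAdv}), and the induction driven by the inverse outer nose retractions (\ref{eqInverseOuterNoseRetraction}). The only difference is bookkeeping: the paper chains both equalities of (\ref{eqInverseOuterNoseRetraction}) in one step (odd $\to$ even $\to$ odd), whereas you iterate the compound second equality for the odd family and then apply the first equality once to obtain the even case.
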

\begin{proof}
We proof the connectedness of the meanders by induction over $n$.
The bi-rainbow meander $\rainbowMeander(\tilde{\arcs}-1, \arcs^*, \tilde{\arcs})$
is connected due to the $\gcd$-formula (\ref{eqGcdOfRainbowsAdv}).
The inverse outer nose retraction (\ref{eqInverseOuterNoseRetraction})
yields
\[
\begin{array}{rcl}
\components( \underbrace{2\tilde{\arcs},\ldots,2\tilde{\arcs}}_{\tilde{n}},
   \tilde{\arcs}-1, \arcs^*,
   \underbrace{2\tilde{\arcs},\ldots,2\tilde{\arcs}}_{\tilde{n}},
   \tilde{\arcs} )
&=&
\components( \tilde{\arcs},
   \underbrace{2\tilde{\arcs},\ldots,2\tilde{\arcs}}_{\tilde{n}},
   \tilde{\arcs}-1, \arcs^*,
   \underbrace{2\tilde{\arcs},\ldots,2\tilde{\arcs}}_{\tilde{n}+1})
\\ &=&
\components( \underbrace{2\tilde{\arcs},\ldots,2\tilde{\arcs}}_{\tilde{n}+1},
   \tilde{\arcs}-1, \arcs^*,
   \underbrace{2\tilde{\arcs},\ldots,2\tilde{\arcs}}_{\tilde{n}+1},
   \tilde{\arcs} ).
\end{array}
\]
This proves the claim with the above base clause $\tilde{n}=0$.
\end{proof}

\begin{figure}
\centering
% redone figure to depict non-collapsed(!) meanders
% \includegraphics[width=1\textwidth]{meander.png}
\setlength{\unitlength}{0.025\textwidth}
\begin{tikzpicture}[scale=0.99*\hsize/39cm]
\begin{scope}[shift={(4,0)}]
\draw [thin] (-4,0) -- (5,0);
\drawMeanderArc    { -2}{ -3}
\drawMeanderArc    {  0}{ -1}
\drawMeanderRainbow{  4}{  1}{ 4}
\drawMeanderRainbow{ -3}{  4}{ 8}
\end{scope}
\begin{scope}[shift={(6,-4)}]
\draw [thin] (-6,0) -- (7,0);
\drawMeanderRainbow{ -2}{ -5}{ 4}
\drawMeanderArc    {  0}{ -1}
\drawMeanderRainbow{  4}{  1}{ 4}
\drawMeanderArc    {  6}{  5}
\drawMeanderRainbow{ -5}{  6}{12}
\end{scope}
\begin{scope}[shift={(8,-9)}]
\draw [thin] (-8,0) -- (9,0);
\drawMeanderArc    { -6}{ -7}
\drawMeanderRainbow{ -2}{ -5}{ 4}
\drawMeanderArc    {  0}{ -1}
\drawMeanderRainbow{  4}{  1}{ 4}
\drawMeanderRainbow{  8}{  5}{ 4}
\drawMeanderRainbow{ -7}{  8}{16}
\end{scope}
\begin{scope}[shift={(28,-8)}]
\draw [thin] (-10,0) -- (11,0);
\drawMeanderRainbow{ -6}{ -9}{ 4}
\drawMeanderRainbow{ -2}{ -5}{ 4}
\drawMeanderArc    {  0}{ -1}
\drawMeanderRainbow{  4}{  1}{ 4}
\drawMeanderRainbow{  8}{  5}{ 4}
\drawMeanderArc    { 10}{  9}
\drawMeanderRainbow{ -9}{ 10}{20}
\end{scope}
\begin{scope}[shift={(26,0)}]
\draw [thin] (-12,0) -- (13,0);
\drawMeanderArc    {-10}{-11}
\drawMeanderRainbow{ -6}{ -9}{ 4}
\drawMeanderRainbow{ -2}{ -5}{ 4}
\drawMeanderArc    {  0}{ -1}
\drawMeanderRainbow{  4}{  1}{ 4}
\drawMeanderRainbow{  8}{  5}{ 4}
\drawMeanderRainbow{ 12}{  9}{ 4}
\drawMeanderRainbow{-11}{ 12}{24}
\end{scope}
\end{tikzpicture}
\caption{\label{figSequenceConnected}
Sequence of connected meanders generated from $\rainbowMeander(1,1,2)$
by iterated inverse outer nose retractions.}
\end{figure}
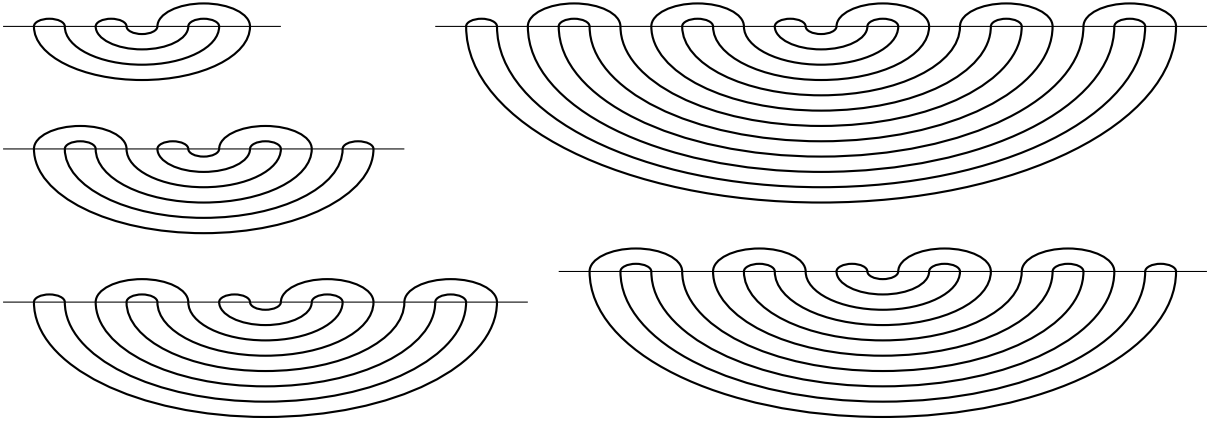

Note, how $\arcs^*$ always represents the middle upper family,
as introduced in (\ref{eqMiddleRainbow}), with $L^* = R^*-1$.
See figure \ref{figSequenceConnected} for an illustration.

\begin{lem}[Scaling]\label{thmComponentsScaling}
Let $\rainbowMeander(\arcs_1, \ldots, \arcs_n)$ be an arbitrary bi-rainbow meander
and $\lambda>0$ a positive integer.
Then, the number of connected components of the bi-rainbow meander
$\lambda\rainbowMeander:= \rainbowMeander(\lambda\arcs_1,\ldots,\lambda\arcs_n)$
scales linearly:
\begin{equation}\label{eqComponentsScaling}
\components(\lambda\rainbowMeander) \;=\;
\components(\lambda\arcs_1, \ldots, \lambda\arcs_n) \;=\;
\lambda \components(\arcs_1, \ldots, \arcs_n) \;=\;
\lambda \components(\rainbowMeander).
\end{equation}
\end{lem}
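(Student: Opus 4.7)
The plan is to argue by strong induction on the total number of upper arcs $\arcs = \sum_{\ell=1}^n \arcs_\ell$, using the outer nose retraction of Lemma~\ref{thmOuterNoseRetraction} as the reduction step. The crucial observation is that each of its four cases is governed purely by the relative sizes of $\arcs_1$ and $\arcs_n$ (namely whether $\arcs_1 = \arcs_n$, $\arcs_1 < \arcs_n < 2\arcs_1$, $2\arcs_1 = \arcs_n$, or $2\arcs_1 < \arcs_n$), and these relations are invariant under uniform scaling by $\lambda$. Moreover, the reduced tuple on the right-hand side of each case depends linearly on the $\arcs_\ell$, so retracting $\lambda\rainbowMeander$ produces precisely the $\lambda$-scaling of the tuple obtained by retracting $\rainbowMeander$; the additive term $\arcs_1$ appearing in case (a) likewise scales to $\lambda\arcs_1$.

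The base case $n = 1$ is immediate since $\components(\arcs_1) = \arcs_1$. For $n \ge 2$, I apply the retraction to both $\rainbowMeander$ and $\lambda\rainbowMeander$ and invoke the induction hypothesis on the smaller meander produced on the right-hand side. In case (a), where $\arcs_1 = \arcs_n$, this reads
\[
\components(\lambda\arcs_1,\ldots,\lambda\arcs_n) = \components(\lambda\arcs_2,\ldots,\lambda\arcs_{n-1}) + \lambda\arcs_1 = \lambda\bigl(\components(\arcs_2,\ldots,\arcs_{n-1}) + \arcs_1\bigr) = \lambda\components(\rainbowMeander),
\]
and cases (b), (c), (d) are analogous but even more direct, since no additive constant is present --- the induction hypothesis applies verbatim to the $\lambda$-scaled reduced tuple.

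For the induction to terminate, each retraction must strictly decrease the total arc count $\arcs$: this holds since cases (a), (c), (d) each remove $2\arcs_1 \ge 2$ arcs and case (b) removes $2(\arcs_n - \arcs_1) \ge 2$ arcs. There is no real obstacle beyond verifying that each branch of the retraction formula is linear in $(\arcs_1,\ldots,\arcs_n)$, which is immediate from the statement of Lemma~\ref{thmOuterNoseRetraction}. A direct topological argument exhibiting $\lambda$ parallel copies of each Jordan curve of $\rainbowMeander$ inside $\lambda\rainbowMeander$ would also work, but the inductive route via the already-established nose retraction is shorter and avoids any explicit bookkeeping of planar coordinates.
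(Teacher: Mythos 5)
Your proof is correct, but it takes a genuinely different route from the paper. The paper proves the scaling lemma directly and structurally: multiplying every family by $\lambda$ replaces each arc of $\rainbowMeander$ joining $a$ and $b$ by $\lambda$ nested arcs joining $\lambda(a-1)+\ell$ to $\lambda b+1-\ell$, and since every arc joins an odd to an even intersection point, the scaled meander $\lambda\rainbowMeander$ decomposes into $\lambda$ disjoint combinatorial copies of $\rainbowMeander$, each copy meeting the axis in an explicitly described set of points; the count then scales trivially. This is exactly the ``$\lambda$ parallel copies'' argument you mention and set aside in your last sentence. Your alternative --- strong induction on $\arcs$ via the outer nose retraction --- is legitimate and non-circular, since Lemma \ref{thmOuterNoseRetraction} is proved geometrically in Section \ref{secNoseRetractions} without any appeal to scaling, and your key observations are sound: the case distinctions in (\ref{eqOuterNoseRetraction}) are order relations preserved under multiplication by $\lambda>0$, the reduced tuples are homogeneous linear in the $\arcs_\ell$ (with the additive term $\arcs_1$ in case (a) scaling correctly), and each genuine retraction strictly decreases the total arc count. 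To make it airtight you should add a word about the reflected case $\arcs_1>\arcs_n$ (reflection commutes with scaling and is applied at most once before a genuine reduction, so well-foundedness is unaffected), about the base $\components(\arcs_1)=\arcs_1$ (elementary, and independent of the scaling lemma), and about the degenerate empty tuple produced by case (a) when $n=2$. In exchange for this bookkeeping, your route is purely arithmetic and needs no planar picture; the paper's route is shorter and yields the stronger structural fact that $\lambda\rainbowMeander$ literally consists of $\lambda$ copies of $\rainbowMeander$, with no dependence on the nose-retraction machinery.
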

\begin{proof}
Let $(1,\ldots,2\arcs)$ again denote the intersections of the original bi-rainbow
meander $\rainbowMeander$ with the horizontal axis.
The scaled meander $\lambda\rainbowMeander$ replaces each arc of
$\rainbowMeander$ by $\lambda$ arcs.
If the original arc of $\rainbowMeander$ connects $a$ and $b$ on the axis,
then the corresponding arcs of $\lambda\rainbowMeander$ connect
$\lambda (a-1) + \ell$ with $\lambda b + 1 - \ell$,
for $\ell = 1,\ldots,\lambda$.
Furthermore, each arc of $\rainbowMeander$ connects an odd with an even point,
see (\ref{eqTranspositionOddEven}).
Therefore, $\lambda\rainbowMeander$ consists of $\lambda$ copies of
$\rainbowMeander$.
Indeed, each copy intersects the horizontal axis in one of the sets
$\{\ell,2\lambda+1-\ell,2\lambda+\ell,4\lambda+1-\ell,4\lambda+\ell,
  \ldots,2\arcs+1-\ell\}$,
$\ell=1,\ldots,\lambda$.
This immediately yields the scaling (\ref{eqComponentsScaling}) of the
number of connected components.
\end{proof}

We are now well prepared to prove the theorem claimed in the introduction:

\begin{thm}\label{thmNoGcd}
Let $n\ge4$ be given.
Then there do not exist homogeneous polynomials
$f_1,f_2 \in \setZ[x_1,\ldots,x_n]$ of arbitrary degree
with integer coefficients such that the
number of connected components $\components(\arcs_1,\ldots,\arcs_n)$
of every bi-rainbow meander $\rainbowMeander(\arcs_1,\ldots,\arcs_n)$
is given by the $\gcd(f_1(\arcs_1,\ldots,\arcs_n),f_2(\arcs_1,\ldots,\arcs_n))$.
In other words: to every choice of polynomials $f_1, f_2$,
we find a counterexample.
\end{thm}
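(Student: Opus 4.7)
The plan is to argue by contradiction: assume $\components(\arcs)=\gcd(f_1(\arcs),f_2(\arcs))$ for every $n$-family bi-rainbow meander, with $f_i$ homogeneous of degree $d_i$ and $d_1\le d_2$. I would first apply the scaling Lemma~\ref{thmComponentsScaling} to a connected meander $F^+$ furnished by Lemma~\ref{thmSequenceConnected}. Since $\components(\lambda F^+)=\lambda$ and $f_i(\lambda F^+)=\lambda^{d_i}f_i(F^+)$, the equation
\[
\gcd(\lambda^{d_1}f_1(F^+),\lambda^{d_2}f_2(F^+))=\lambda
\]
must hold for every integer $\lambda\ge1$. Comparing prime-power behavior in $\lambda$ on both sides restricts the admissible degrees to three scenarios: (A)~$d_1=0$ with $f_1\equiv 0$, $d_2=1$, and $|f_2(\arcs)|=\components(\arcs)$ on every meander; (B)~$d_1=1$, $d_2\ge 2$, and $|f_1(F^+)|=1$; or (C)~$d_1=d_2=1$ with $\gcd(f_1(F^+),f_2(F^+))=1$.

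Cases~(A) and~(C) would be ruled out by a parity argument. Corollary~\ref{thmRainbowComponentsParity} states that the parity of $\components$ depends only on the parity pattern $\epsilon=(\arcs_j\bmod 2)$ and is even exactly when the number $k$ of odd entries satisfies $k\bmod 4\in\{0,3\}$. Reducing the coefficients of $f_i$ modulo $2$ gives linear forms $\bar f_i$ over $\setZ/2\setZ$, and the parity of the $\gcd$ is even iff both $\bar f_i(\epsilon)$ vanish. So in both cases~(A) and~(C) one obtains $\bar f_i(\epsilon)=0$ for every $\epsilon$ with $|\epsilon|\in\{3,4\}$, both cardinalities being available when $n\ge 4$. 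Differencing two $|\epsilon|=3$ patterns that agree in two coordinates forces all coefficients of $\bar f_i$ to coincide, and then a single $|\epsilon|=3$ constraint yields $3\bar a=\bar a=0$ in $\setZ/2\setZ$. Hence $\bar f_i\equiv 0$, so each $f_i(\arcs)$ is always even and the $\gcd$ always even. But the bi-rainbow meander $\rainbowMeander(1,2,1,2,\ldots,2)$ with the last $n-3$ entries all equal to $2$ has $k=2$ odd entries and hence odd $\components$ by the corollary, a contradiction.

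For case~(B), I would analyse the zero set of the linear form $f_1$. Writing $F^+(\tilde\arcs,\arcs^*)=\tilde\arcs\,v_1+\arcs^*\,v_2+v_0$ with $v_1,v_2$ the direction vectors of the two-parameter family and $v_0$ a base shift, the identity $|f_1(F^+)|=1$, constant in $(\tilde\arcs,\arcs^*)$, forces $f_1(v_1)=f_1(v_2)=0$ and $|f_1(v_0)|=1$. These conditions pin $f_1$ down to a finitely parametrised family, every member of which necessarily has coefficients of mixed sign: a sign-definite linear form is monotone on positive rays and cannot be bounded on the unbounded family $F^+$. Consequently the hyperplane $\{f_1=0\}$ meets the open positive orthant, and by homogeneity contains an entire ray $\{\lambda\arcs_0:\lambda\ge 1\}$ of valid bi-rainbow meanders. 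On this ray $\gcd(f_1,f_2)=|f_2|$, so by Lemma~\ref{thmComponentsScaling} one has $\lambda\components(\arcs_0)=\lambda^{d_2}|f_2(\arcs_0)|$ while $\components(\arcs_0)=|f_2(\arcs_0)|\ge 1$. Thus $\lambda=\lambda^{d_2}$ for every $\lambda\ge1$, forcing $d_2=1$ and contradicting $d_2\ge 2$.

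The main obstacle, in my view, is the sign-check in case~(B): one must verify for every $n\ge 4$ and every $f_1$ compatible with $|f_1(F^+)|=1$ that the coefficients are truly of mixed sign, so that its zero set really hits a positive-integer ray. This reduces to a direct computation using the explicit form of $F^+$ from Lemma~\ref{thmSequenceConnected}, showing that $f_1(v_1)=f_1(v_2)=0$ combined with $|f_1(v_0)|=1$ is incompatible with all coefficients of the same sign. The parity argument of the previous step is, by contrast, uniform in $n\ge 4$ and purely combinatorial: it relies only on the availability of both $|\epsilon|=3$ and $|\epsilon|=4$ parity patterns in dimension~$n\ge 4$ together with the existence of a bi-rainbow meander with odd $\components$ in each such dimension.
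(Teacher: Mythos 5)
Your proposal is correct, and it rests on the same three pillars as the paper's proof -- the scaling Lemma~\ref{thmComponentsScaling}, the connected two-parameter family of Lemma~\ref{thmSequenceConnected}, and the parity Corollary~\ref{thmRainbowComponentsParity} -- but it executes two sub-steps differently. For the mixed-degree case ($d_1=1$, $d_2\ge2$) the paper never looks at the zero set of $f_1$: it first shows via the meanders $\rainbowMeander(1,\ldots,1,\arcs^*,1,\ldots,1)$, whose component count grows linearly in $\arcs^*$, that the $\arcs^*$-coefficient of $f_1$ is nonzero, and then substitutes $\lambda=f_1(\bar\arcs)>1$ into the scaled gcd to obtain $\lambda=\lambda^2$. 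Your alternative -- $|f_1(F^+)|=1$ on the whole family, hence $f_1(v_1)=f_1(v_2)=0$ and $|f_1(v_0)|=1$, hence mixed-sign coefficients, hence a positive integer zero ray of $f_1$ along which the gcd scales like $\lambda^{d_2}$ -- is sound, and the two points you flag are routine: if all coefficients had one (weak) sign, then $f_1(v_1)=0$ (all entries of $v_1$ positive except the $\arcs^*$-slot) together with $f_1(v_2)=0$ would force $f_1\equiv0$, contradicting $|f_1(v_0)|=1$; and given mixed signs an integer point $\arcs_0>0$ with $f_1(\arcs_0)=0$ can be written down explicitly (set $\arcs_{0,k}=N$ on positive, $=P$ on negative, $=1$ on zero coefficients, where $P$ and $N$ are the sums of the positive coefficients and of the absolute values of the negative ones). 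You could even short-circuit this case: your conclusion $f_1(v_2)=0$ already contradicts the paper's observation that the $\arcs^*$-coefficient must be nonzero. For the all-linear cases your mod-2 endgame also differs: you use weight-3 parity patterns (even $\components$) to force both forms to vanish mod 2 and then contradict with a weight-2 pattern, whereas the paper extracts from weight-1 and weight-2 patterns the constraints that the coefficient pairs $(f_{1,\ell},f_{2,\ell})$ are nonzero and pairwise distinct mod 2 and applies the pigeonhole principle to the three nonzero residue classes; both variants use $n\ge4$ in an essential way and are equally valid, the paper's being marginally shorter, yours having the small bonus of treating the degenerate possibility $f_1\equiv 0$ explicitly.
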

\begin{proof}
We assume the contrary: let $n\ge 4$ and $f_1, f_2 \in \setZ[x_1,\ldots,x_n]$
be homogeneous polynomials with integer coefficients, such that for all
bi-rainbow meanders $\rainbowMeander = \rainbowMeander(\arcs_1,\ldots,\arcs_n)$
\begin{equation}\label{eqGcdAssumption}
\components(\arcs_1,\ldots,\arcs_n) \;=\;
\gcd(f_1(\arcs_1,\ldots,\arcs_n),f_2(\arcs_1,\ldots,\arcs_n)).
\end{equation}
We shall find the contradiction in four steps:
\begin{enumerate}[itemsep=0ex,topsep=0ex,parsep=0ex]
\item Show that one of $f_1, f_2$ must have degree one, i.e.~is linear.
\item Show that both of $f_1, f_2$ must have degree one, i.e.~are linear.
\item Find conditions on the parity of the coefficients of $f_1, f_2$.
\item Show the contradiction by the pigeonhole principle.
\end{enumerate}

\emph{Step 1.
[Show that one of $f_1, f_2$ must have degree one, i.e.~is linear.]}
Let $d_j \in \setN$ denote the degree of $f_j$, $j=1,2$. Then,
for all positive integers $\lambda$,
\[
f_j(\lambda\arcs_1, \ldots, \lambda\arcs_n)
  \;=\; \lambda^{d_j}(\arcs_1, \ldots, \arcs_n).
\]
%Obviously, $d_j\ge 1$.
Take an arbitrary bi-rainbow meander
$\rainbowMeander(\bar{\arcs}) := \rainbowMeander(\arcs_1,\ldots,\arcs_n)$
and $\lambda$ co-prime to $f_1(\bar\arcs)$ and $f_2(\bar\arcs)$.
The scaling lemma \ref{thmComponentsScaling} and
assumption (\ref{eqGcdAssumption}) then yield
\[
\begin{array}{rcl}
\lambda \components(\bar\arcs)
 &=&  \components(\lambda\bar\arcs)
\;=\; \gcd(f_1(\lambda\bar\arcs),f_2(\lambda\bar\arcs))
\;=\; \lambda^{\min(d_1,d_2)}\gcd(f_1(\bar\arcs),f_2(\bar\arcs))
\\&=& \lambda^{\min(d_1,d_2)}\components(\bar\arcs).
\end{array}
\]
Therefore, $\min(d_1,d_2) = 1$ and one of the polynomials must indeed be linear.

\emph{Step 2.
[Show that both of $f_1, f_2$ must have degree one, i.e.~are linear.]}
Without loss of generality, $d_1=1$ and $d_2 \ge 1$, by step 1.
Let $\rainbowMeander(\bar{\arcs})$ be the connected bi-rainbow meander of lemma
\ref{thmSequenceConnected} with $n$ upper rainbow families.
Indeed, the sequence (\ref{eqSequenceConnected}) contains one element
for each $n\ge 4$.
Again, we use the scaling lemma \ref{thmComponentsScaling} and
assumption (\ref{eqGcdAssumption}) to obtain
\[
\begin{array}{rcl}
\lambda
 &=&  \lambda \components(\bar\arcs)
\;=\; \components(\lambda\bar\arcs)
\;=\; \gcd(f_1(\lambda\bar\arcs),f_2(\lambda\bar\arcs))
\;=\; \gcd(\lambda f_1(\bar\arcs), \lambda^{d_2} f_2(\bar\arcs))
\\&=& \lambda \gcd(f_1(\bar\arcs), \lambda^{d_2-1} f_2(\bar\arcs)).
\end{array}
\]
Observe that $f_1$ must depend on $\arcs^*$,
the size of the middle arc collection of $\bar\arcs$.
Indeed,
taking a different bi-rainbow meander which only keeps the family $\arcs^*$,
\[
\begin{array}{rcll}
\components(1,\ldots,1,\arcs^*,1,\ldots,1) &=& \arcs^*+\frac{n-1}{2}, &
\mbox{for odd } n, \mbox{ and} \\
\components(1,\ldots,1,\arcs^*,1,\ldots,1,2) &=& \arcs^*+\frac{n-2}{2}, &
\mbox{for even } n,
\end{array}
\]
 become arbitrarily large for $\arcs^*\to\infty$.
Here, we have chosen $L^*=R^*$,
compare with lemma \ref{thmInnerNoseRetraction}.
As $f_1$ is linear, this forces the coefficient of $\arcs^*$ of $f_1$
to be non-zero.

Now, we choose $\alpha^*$ large enough, such that $f_1(\bar\arcs) > 1$.
Then we select $\lambda = f_1(\bar\arcs)$ to find
\[
\begin{array}{rcl}
\lambda
 &=& \lambda \gcd(f_1(\bar\arcs), \lambda^{d_2-1} f_2(\bar\arcs))
 \;=\; \lambda^2, \qquad \mbox{for\ } d_2 \ge 2.
\end{array}
\]
This is a contradiction. Therefore, $d_2=1$ as claimed.

\emph{Step 3.
[Conditions on the parity of the coefficients of $f_j$.]}
Let
\[
  f_j(\arcs_1,\ldots,\arcs_n) \;=\; \sum_{\ell=1}^n f_{j,\ell} \arcs_\ell,
  \qquad
  f_{j,\ell} \in \setZ,
\]
be the homogeneous polynomials of degree one.
From corollary \ref{thmRainbowComponentsParity} we know
that $\components(\bar\arcs)$ is odd for arbitrary
$\bar\arcs = (\arcs_1,\ldots,\arcs_n)$ with exactly one or two odd components.
The parity (mod 2) of assumption (\ref{eqGcdAssumption}) applied to
bi-rainbow meanders with exactly one odd component $\arcs_\ell$ yields
\[
  1 \;\equiv\; \gcd( f_{1,\ell}, f_{2,\ell} ) \pmod 2.
\]
Applied to bi-rainbow meanders with exactly two odd components
$\arcs_k,\arcs_\ell$, it yields
\[
  1 \;\equiv\; \gcd( f_{1,k}+f_{1,\ell}, f_{2,k}+f_{2,\ell} ) \pmod 2,
  \qquad k\neq\ell.
\]
Thus, for arbitrary $k\neq\ell$, the following conditions must hold:
\begin{equation}\label{eqParity}
\begin{array}{rcll}
(f_{1,\ell}, f_{2,\ell}) &\not\equiv& (0,0) &\pmod 2, \\
(f_{1,\ell}, f_{2,\ell}) &\not\equiv& (f_{1,k}, f_{2,k}) &\pmod 2.
\end{array}
\end{equation}

\emph{Step 4.
[Contradiction by the pigeonhole principle.]}
The first condition of (\ref{eqParity}) leaves only three possibilities
for $(f_{1,\ell}, f_{2,\ell})$:
\[
  \{\; (0,1), \; (1,0), \; (1,1) \;\} \pmod 2.
\]
If $n \ge 4$ then one of the three choices must appear more than once, say at
$k$ and $\ell$. But this violates the second condition of (\ref{eqParity}).
This is the final contradiction to the initial assumption and proves
the impossibility of a $\gcd$-formula (\ref{eqGcdAssumption}).
\end{proof}

% ======================================================================

\section{Euclidean-like algorithms\label{secAlgorithms}}

Nose retractions, as introduced in section \ref{secNoseRetractions},
have been used
before to establish finite algorithms on meander curves
\cite{CollMagnantWang2012-Meander}.
Here, however, we will improve the nose retractions
(\ref{eqOuterNoseRetraction}) and (\ref{eqInnerNoseRetraction}) to
establish rigorous bounds on the complexity of the resulting algorithms.
This will show a striking similarity to the calculation of the greatest common
divisor by the Euclidean algorithm.

\begin{prop}\label{thmGcdOfRainbows}
The number of connected components of bi-rainbow meanders with
less than four upper families is given by
\begin{equation}\label{eqGcdOfRainbows}
\begin{array}{lcl}
\components(\arcs_1) &=& \arcs_1, \\
\components(\arcs_1,\arcs_2) &=& \gcd(\arcs_1, \arcs_2), \\
\components(\arcs_1,\arcs_2,\arcs_3) &=& \gcd(\arcs_1+\arcs_2,\arcs_2+\arcs_3),
\end{array}
\end{equation}
where $\gcd$ denotes the greatest common divisor.
\end{prop}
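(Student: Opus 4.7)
The plan is as follows. The one-family identity $\components(\arcs_1)=\arcs_1$ is immediate: the upper rainbow of $\arcs_1$ nested arcs matches the lower rainbow of $\arcs_1$ nested arcs into $\arcs_1$ concentric Jordan curves.

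For $\components(\arcs_1,\arcs_2)=\gcd(\arcs_1,\arcs_2)$ I induct on $\arcs_1+\arcs_2$ and apply the outer nose retraction of Lemma~\ref{thmOuterNoseRetraction} with $n=2$, using the symmetry $\components(\arcs_1,\arcs_2)=\components(\arcs_2,\arcs_1)$ to assume $\arcs_1\le\arcs_2$. Cases (a) and (c) drop us to the one-family base case, giving $\arcs_1=\gcd(\arcs_1,\arcs_2)$. Cases (b) and (d) produce a strictly smaller two-family meander, $(2\arcs_1-\arcs_2,\arcs_1)$ or $(\arcs_1,\arcs_2-2\arcs_1)$ respectively; since both operations leave $\gcd(\arcs_1,\arcs_2)$ invariant, the induction closes. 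The recursion is literally the subtractive Euclidean algorithm.

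The three-family case is handled the same way: induction on $\arcs_1+\arcs_2+\arcs_3$, WLOG $\arcs_1\le\arcs_3$, outer nose retraction on the outermost pair $(\arcs_1,\arcs_3)$. Cases (a) and (c) reduce to already-proven formulas, yielding $\arcs_1+\arcs_2$ and $\gcd(\arcs_1,\arcs_2)$ respectively, each matching $\gcd(\arcs_1+\arcs_2,\arcs_2+\arcs_3)$ after one elementary step. Cases (b) and (d) yield a smaller three-family meander whose count, by the induction hypothesis, equals
\[
\gcd\bigl((2\arcs_1-\arcs_3)+\arcs_2,\,\arcs_1+\arcs_2\bigr) \qquad\mbox{or}\qquad \gcd\bigl(\arcs_1+\arcs_2,\,\arcs_3-\arcs_1\bigr),
\]
and both collapse to the target via the identity $\gcd(\arcs_1+\arcs_2,\arcs_2+\arcs_3)=\gcd(\arcs_1-\arcs_3,\,\arcs_1+\arcs_2)$. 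The main technical obstacle is this last check, in particular for case~(b), where the retracted triple $(2\arcs_1-\arcs_3,\arcs_2,\arcs_1)$ has to be rearranged back into the claimed closed form; each individual verification is a one-line $\gcd$-manipulation, but it is the only place where the specific shape $\gcd(\arcs_1+\arcs_2,\arcs_2+\arcs_3)$ is really exercised. The overall argument is a Euclidean-style descent, mirroring the algorithmic viewpoint advertised in the introduction.
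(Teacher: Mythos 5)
Your proposal is correct and follows essentially the paper's own route: the paper proves this proposition by induction over the total number of arcs $\arcs=\sum\arcs_k$ using the nose-retraction lemmas, which is exactly your Euclidean-style descent via Lemma~\ref{thmOuterNoseRetraction}. Your case-by-case $\gcd$ verifications (invariance under the retractions in both the two- and three-family cases) are accurate and merely make explicit what the paper leaves to the reader.
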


\begin{proof}
The proof is easily done by induction over $\arcs=\sum \arcs_k$ using either
nose retraction (\ref{eqOuterNoseRetraction}) or (\ref{eqInnerNoseRetraction}).
\end{proof}

Note that the greatest common divisor is an abbreviation for the
Euclidean algorithm:
\begin{equation}\label{eqEuclideanAlgorithm}
\gcd(a_1,a_2) \;=\; \gcd(a_2,a_1) \;=\; \left\{
\begin{array}{lll}
a_1 &,& a_1=a_2,\\
\gcd(a_1, \remainder(a_2,a_1)) &,& a_1<a_2.
\end{array}\right.
\end{equation}
Here, $\remainder(a_2,a_1)$ denotes the remainder of the integer
division $a_2/a_1$.
This algorithm stops after $\Ord(\log a_1 + \log a_2)$ steps.
Indeed, $\remainder(a_2,a_1) < a_2/2$.
The number of bits needed to encode the problem is strictly decreased
in each step.
Here, we assume the elementary operations of (\ref{eqEuclideanAlgorithm})
to be of complexity $\Ord(1)$.
Complexity of arithmetic of large integers could be considered
but is not our focus here.

Turning back to the nose-retraction algorithm, denote
\[
\bits \;=\; \sum_{\ell=1}^n \Big\lceil \log_2 (\arcs_\ell+1) \Big\rceil \;=\;
\Ord\left( \sum_{\ell=1}^n \log \arcs_\ell \right)
\]
the number of bits needed to encode the bi-rainbow meander
$\rainbowMeander(\arcs_1,\ldots,\arcs_n)$.
We want to improve the nose retractions (\ref{eqOuterNoseRetraction}) and
(\ref{eqInnerNoseRetraction}) to decrease $b$.

Among the outer nose retractions (\ref{eqOuterNoseRetraction}),
cases (b) and (d) are problematic.
However, if $\arcs_n$ is large, $\arcs_n > 2\arcs/3$,
then case (d) can be applied $(n-1)$ times:
\[
\components(\arcs_1,\ldots,\arcs_n) \;=\;
\components(\arcs_1,\ldots,\arcs_{n-1},\arcs_n-2(\arcs-\arcs_n)),
\qquad \arcs_n > 2\arcs/3.
\]
Further iteration yields
\begin{equation}\label{eqIteratedOuterNoseRetractionPreD}
\components(\arcs_1,\ldots,\arcs_n) \;=\;
\components(\arcs_1,\arcs_2,\ldots,\arcs_{n-1},
  \remainder(\arcs_n,2(\arcs-\arcs_n)),
\end{equation}
again with the remainder $\remainder$ of the integer division.
Similarly, case (b) can be iterated, as long as its condition,
$\alpha_1 < \alpha_n < 2\alpha_1$, remains valid:
\begin{equation}\label{eqIteratedOuterNoseRetractionPreB}
\components(\arcs_1,\ldots,\arcs_n) \;=\;
\components(\remainder(\arcs_1, \alpha_n-\alpha_1),\arcs_2,\ldots,\arcs_{n-1},
  \alpha_n-\alpha_1+\remainder(\arcs_1,\alpha_n-\alpha_1)).
\end{equation}
Indeed, during the iteration,
the difference $\alpha_n-\alpha_1$ remains constant.

\begin{thm}[Outer nose retraction]\label{thmIteratedOuterNoseRetraction}
The outer nose retractions yield the algorithm
\begin{equation}\label{eqIteratedOuterNoseRetraction}
\begin{array}{l}
\components(\arcs_1,\arcs_2,\ldots,\arcs_{n-1},\arcs_n) \; = \\ \quad =
\left\{\begin{array}{lll}
\components(\arcs_n,\arcs_{n-1},\ldots,\arcs_2,\arcs_1),
  & \arcs_1 > \arcs_n, & (a)\\
\components(\arcs_2,\ldots,\arcs_{n-1}) + \arcs_1,
  & \arcs_1 = \arcs_n, & (b)\\
\components(\remainder(\arcs_1, \alpha_n\!-\!\alpha_1),
  \arcs_2,\ldots,\arcs_{n-1},
  \alpha_n\!-\!\alpha_1+\remainder(\arcs_1,\alpha_n\!-\!\alpha_1)),
  & \arcs_1 < \arcs_n < 2\arcs_1, & (c)\\
\components(\arcs_2,\ldots,\arcs_{n-1},\arcs_1),
  & 2\arcs_1 = \arcs_n, & (d)\\
\components(\arcs_2,\ldots,\arcs_{n-1},\arcs_1, \arcs_n-2\arcs_1),
  & 2\arcs_1 < \arcs_n < \frac{2}{3}\arcs, & (e)\\
\components(\arcs_1,\arcs_2,\ldots,\arcs_{n-1} ),
  & 2(a-\arcs_n) \,\Big|\, \arcs_n, & (f)\\
\components(\arcs_1,\arcs_2,\ldots,\arcs_{n-1},
  \remainder(\arcs_n, 2(\arcs-\arcs_n))),
  & otherwise, & (g)
\end{array}\right.
\end{array}\hspace*{-1em}
\end{equation}
with logarithmic complexity $\Ord(b)\Ord(n)$
to determine the number $\components$ of connected components of the
bi-rainbow meander $\rainbowMeander(\arcs_1,\ldots,\arcs_n)$.
\end{thm}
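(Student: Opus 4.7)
The plan has two parts: first I check that each case of the algorithm is a correct rewriting derived from Lemma \ref{thmOuterNoseRetraction} and its iterates; then I bound the number of recursive calls by a Euclidean-style argument on a suitable potential.

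\emph{Correctness.} Case (a) is the reflection symmetry stated in Lemma \ref{thmOuterNoseRetraction}; cases (b), (d), (e) of the theorem are, respectively, cases (a), (c), (d) of the lemma. Case (c) condenses a maximal run of applications of Lemma \ref{thmOuterNoseRetraction}(b): each such iterate preserves the difference $\arcs_n-\arcs_1$ and decrements the first entry by exactly this difference, so the run terminates at $\remainder(\arcs_1,\arcs_n-\arcs_1)$, reproducing (\ref{eqIteratedOuterNoseRetractionPreB}). Cases (f) and (g) condense a maximal block of $(n-1)k$ applications of Lemma \ref{thmOuterNoseRetraction}(d): every $n-1$ such applications cycle the first $n-1$ entries back to their original positions while reducing the last entry by $2(\arcs-\arcs_n)$, and the block terminates at $\remainder(\arcs_n,2(\arcs-\arcs_n))$ as in (\ref{eqIteratedOuterNoseRetractionPreD}), with case (f) isolating the divisible sub-case that removes the last entry.

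\emph{Complexity.} I use the potential $\Phi = n + b$. Cases (b), (d), (f) strictly reduce $n$. Cases (c) and (g) use the Euclidean estimate $\remainder(a,d) < a/2$, applied once with $a = \arcs_1$ (case c) or $a = \arcs_n$ (case g); since no other entry grows in bit length, $b$ drops by at least one. Case (a) is a reflection whose guarding condition $\arcs_1 > \arcs_n$ flips after the swap, so it cannot recur immediately and contributes only a constant overhead between non-reflection calls.

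The delicate case is (e), which by itself need not decrease $\Phi$: it cyclically rotates the first entry to position $n-1$ and replaces $\arcs_n$ by $\arcs_n - 2\arcs_1$, preserving the bound $\arcs_n < \frac{2}{3}\arcs$. The key observation for the complexity bound is that at most $n-1$ consecutive case-(e) calls can occur before either the tuple returns to its original order with $\arcs_n$ replaced by $\arcs_n - 2(\arcs - \arcs_n)$ (an effect identical to one case-(g) step, and thus already causing $b$ to drop), or a different case intervenes earlier and $\Phi$ drops sooner. Charging each such maximal block of case-(e) calls to the subsequent $\Phi$-decreasing step yields $\Ord(b)$ effective reductions. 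Each reduction carries $\Ord(n)$ bookkeeping to update the tuple, giving the asserted total complexity $\Ord(b)\Ord(n)$. The main obstacle of the full proof is making the case-(e) book-keeping rigorous, because the guard inequalities of the other cases can fire at unpredictable moments within the block; but since every block still terminates within $n-1$ steps with a $\Phi$-drop, the overall bound is unaffected.
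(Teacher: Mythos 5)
Your proposal follows essentially the same route as the paper: correctness is read off from lemma \ref{thmOuterNoseRetraction} together with the condensed iterates (\ref{eqIteratedOuterNoseRetractionPreB}) and (\ref{eqIteratedOuterNoseRetractionPreD}) (with case (f) as the zero-remainder instance of (g)), and the complexity bound comes from observing that cases (b,c,d,f,g) strictly reduce the encoding size, that (a) cannot recur immediately, and that runs of case (e) have length $\Ord(n)$. One aside in your case-(e) analysis is inaccurate, although it does not affect the bound: under the guard $\arcs_n<\frac{2}{3}\arcs$, i.e.\ $\arcs_n<2(\arcs-\arcs_n)$, a full cycle of $n-1$ consecutive (e)-steps would drive the last entry negative, so that branch of your disjunction is vacuous (and a single subtraction of $2(\arcs-\arcs_n)$ would in any case not be the full modular reduction of a (g)-step, so ``thus already causing $b$ to drop'' is unjustified); the correct statement, which the paper asserts, is that at most $n-2$ successive (e)-steps are possible, since the $k$-th one requires $\arcs_n>2(\arcs_1+\cdots+\arcs_k)$ while the guard gives $\arcs_n<2(\arcs_1+\cdots+\arcs_{n-1})$. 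Because your alternative branch (``a different case intervenes earlier'') is what in fact always happens, your charging argument and the resulting $\Ord(b)\Ord(n)$ bound remain valid.
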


\begin{proof}
The validity of the algorithm follows
directly from (\ref{eqOuterNoseRetraction}) of
lemma \ref{thmOuterNoseRetraction} and the observations
(\ref{eqIteratedOuterNoseRetractionPreD},
\ref{eqIteratedOuterNoseRetractionPreB}) above.
Note the special case (f), which is case (g) with zero remainder.

Cases (b,c,d,f,g) reduce the number $b$ of bits.
Case (a) cannot be applied twice in succession,
in fact it could be replaced by symmetric copies of (c--g).
Finally, case (e) can be applied at most $(n-2)$ times in succession.
This yields the claimed complexity of the algorithm.
\end{proof}

Although we have found an algorithm of similar complexity than
the Euclidean algorithm, the number of cases is quite large.
The inner nose retraction (\ref{eqInnerNoseRetraction}) turns out to be
more beautiful.

\begin{thm}[Inner nose retraction]\label{thmIteratedInnerNoseRetraction}
The inner nose retraction yields the algorithm
\begin{equation}\label{eqIteratedInnerNoseRetraction}
\begin{array}{l}
\components(\arcs_1,\arcs_2,\ldots,\arcs_{n-1},\arcs_n) \; = \\ \quad =
\left\{\begin{array}{lll}
\components(\arcs_1,\ldots,\arcs_{m^*-1},\arcs_{m^*+1},\ldots,\arcs_n)
  + \arcs_{m^*},
  & L^* = R^*, & (a)\\
\components(\arcs_1,\ldots,\arcs_{m^*-1},\arcs_{m^*+1},\ldots,\arcs_n) ,
  & |L^*\!-\!R^*| \,\Big|\, \arcs_{m^*}, & (b)\\
\components(\arcs_1,\ldots,\arcs_{m^*-1},\remainder(\arcs_{m^*},|L^*\!-\!R^*|),
  \arcs_{m^*+1},\ldots,\arcs_n),
  & otherwise, & (c)
\end{array}\right.
\end{array}\hspace*{-1em}
\end{equation}
with logarithmic complexity $\Ord(b)\Ord(n)$
to determine the number $\components$ of connected components of the
bi-rainbow meander $\rainbowMeander(\arcs_1,\ldots,\arcs_n)$.
The values $m^*, L^*, R^*$ denote the index of the middle rainbow family and
the total numbers of arcs in the left and right rainbow families,
as defined in (\ref{eqMiddleRainbow}).
\end{thm}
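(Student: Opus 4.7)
The plan is to derive the three cases of the recursion by iterating lemma \ref{thmInnerNoseRetraction}, with the crucial structural fact that the middle index $m^*$ stays fixed throughout the iteration, and then to extract the complexity bound from a per-step bit-count argument. Case (a) is verbatim lemma \ref{thmInnerNoseRetraction}(a). For cases (b) and (c), the key observation to verify is that a single application of lemma case (c) leaves $m^*$ (as well as $L^*$ and $R^*$) invariant whenever the new middle-family size $\arcs_{m^*}^{\mathrm{new}} := \arcs_{m^*}-|L^*\!-\!R^*|$ is still at least $|L^*\!-\!R^*|$: since only $\arcs_{m^*}$ is altered, the middle-family defining inequality $L^* < \arcs/2 \le L^*+\arcs_{m^*}^{\mathrm{new}}$ rewrites, via $\arcs=L^*+\arcs_{m^*}^{\mathrm{new}}+R^*$, to exactly $|L^*\!-\!R^*|\le\arcs_{m^*}^{\mathrm{new}}$. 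With this invariance, iterating lemma case (c) is Euclidean-style subtraction of the constant $|L^*\!-\!R^*|$ from $\arcs_{m^*}$; the iteration terminates either when it reaches $\arcs_{m^*}^{(k)}=|L^*\!-\!R^*|$ (if $|L^*\!-\!R^*|$ divides $\arcs_{m^*}$), whereupon lemma case (b) removes the whole middle family, producing theorem case (b); or the first time $\arcs_{m^*}^{(k)}<|L^*\!-\!R^*|$, leaving $\remainder(\arcs_{m^*},|L^*\!-\!R^*|)$ as the new middle size and producing theorem case (c).

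For the complexity, I would show that in case (c) the new middle size actually satisfies $\arcs_{m^*}^{\mathrm{new}} < \arcs_{m^*}/2$. If $|L^*\!-\!R^*|\le\arcs_{m^*}/2$ this follows directly from $\arcs_{m^*}^{\mathrm{new}}<|L^*\!-\!R^*|$; otherwise, since the proof of lemma \ref{thmInnerNoseRetraction} already gives $|L^*\!-\!R^*|\le\arcs_{m^*}$, the integer division of $\arcs_{m^*}$ by $|L^*\!-\!R^*|$ has quotient one and remainder $\arcs_{m^*}-|L^*\!-\!R^*|<\arcs_{m^*}/2$. Hence each case (c) step reduces $\bits$ by at least one and so can be applied at most $\Ord(\bits)$ times. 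Cases (a) and (b) each decrement $n$ by one and thus occur at most $n-1$ times in total. Since recomputing $L^*$, $R^*$, $m^*$ after each step costs $\Ord(n)$ and $\bits\ge n$, the total running time is $\Ord(\bits)\Ord(n)$, as claimed.

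The principal obstacle is the invariance of $m^*$ under the Euclidean-style iteration: this is what turns the iterated lemma into a single-shot remainder substitution in theorem case (c), and is the only nontrivial geometric check. The remaining pieces --- the two termination scenarios and the per-step bit reduction --- are routine once that invariance is established.
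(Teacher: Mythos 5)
Your proposal is correct and follows essentially the same route as the paper: iterate lemma \ref{thmInnerNoseRetraction}(c), observe that $m^*$, $L^*$, $R^*$ stay fixed as long as the shrinking middle family is not smaller than $|L^*\!-\!R^*|$ (yielding the remainder in case (c) and the zero-remainder case (b)), and bound the number of steps by the bit count with an $\Ord(n)$ cost per step for recomputing $m^*,L^*,R^*$. Your halving argument $\remainder(\arcs_{m^*},|L^*\!-\!R^*|)<\arcs_{m^*}/2$ and the explicit check that the middle-family condition reduces to $|L^*\!-\!R^*|\le\arcs_{m^*}^{\mathrm{new}}$ merely make rigorous two points the paper states without detail.
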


\begin{proof}
The validity of the algorithm follows
again by iteration of (\ref{eqInnerNoseRetraction}) of
lemma \ref{thmInnerNoseRetraction}.
Indeed, as long as $\arcs_{m^*}$
after application of (\ref{eqInnerNoseRetraction})(c)
is not smaller than $|L^*-R^*|$,
the $m^*$-th family remains the middle one.
Furthermore, the values $L^*$ and $R^*$ do not change.
Iteration yields case (c) of (\ref{eqIteratedInnerNoseRetraction}) with
the special case (b) of zero remainder.

All cases of (\ref{eqIteratedInnerNoseRetraction}) reduce the number
$b$ of bits.
However, the values $m^*, L^*, R^*$ need to be computed in every step.
Together, we again find the bound $\Ord(b)\Ord(n)$ on the complexity
of the algorithm.
\end{proof}

For rainbow families $\arcs_k$ of similar size, the update of $m^*, L^*, R^*$
can be done starting from the old values.
The old and new midpoints should then only be $\Ord(1)$ apart.
This is similar to theorem \ref{thmIteratedOuterNoseRetraction} where
the factor $\Ord(n)$ is due to the case of $\arcs_n$ very large with respect to
all the other families.

The resulting complexity is therefore expected to be rather close to $\Ord(b)$
for ``typical'' bi-rainbow meanders.

The inner nose-retraction algorithm (\ref{eqIteratedInnerNoseRetraction})
very closely resembles the Euclidean algorithm (\ref{eqEuclideanAlgorithm}).
Indeed, the main operation of both algorithms is the remainder of an
integer division.
In the case of two upper families,
$\components(\arcs_1,\arcs_2) = \gcd(\arcs_1,\arcs_2)$,
both algorithms are in fact identical.

% ======================================================================

\section{Discussion \& outlook\label{secDiscussion}}

We have studied the existence of closed expressions
for the number of Jordan curves of a bi-rainbow meander.

On the one hand, there might be no convenient formula
in the greatest common divisor of the sizes of the involved
rainbow families.
Although theorem \ref{thmNoGcd} does not exclude all possible gcd formulae,
its proof shows major obstacles and confirms the respective conjecture 19 of
\cite{CollMagnantWang2012-Meander}.
The homogeneity assumption, for example,
can be weakened by a more careful scaling argument.
More arguments $f_k$ of the gcd can be allowed for meanders with more than
4 upper families. Indeed, linearity of $f_k$ follows inductively as in step 2
of the proof of theorem \ref{thmNoGcd}. The pigeonhole principle, step 4,
can be applied for bi-rainbows with at least $2^m$ upper families for
formulae of the form $\gcd(f_1,\ldots,f_m)$ with $m$ arguments.

On the other hand, instead of looking for more complicated gcd formulae,
we found an Euclidean-like algorithm to determine the number of Jordan curve.
Just as the Euclidean algorithm computed the gcd, suitably combined
nose retractions determine the number of Jordan curves in logarithmic time.
Moreover, the main step computes the remainder of an integer division
in close similarity to the Euclidean algorithm.
The number of Jordan curves of a bi-rainbow meander thus becomes
another number-theoretic quantity similar to the gcd.

So far, we have dealt with the special case of bi-rainbow meanders.
For most of the applications, this is only a first step.
In a forthcoming paper \cite{KarnauhovaLiebscher2015-GeneralNoseRetraction},
we shall describe logarithmic algorithms by nose retractions of
general meanders.

Note, however, that logarithmic algorithms require a representation of
the meander of logarithmic size.
Indeed, the algorithm has to at least read the input.
If the meander is represented as a product of transpositions or a permutation,
sections \ref{secMeanderTranspositions}--\ref{secMeanderShooting},
then no algorithm can be faster than $\Ord(\alpha)$~---
just as a direct inspection of the meander curves.
Indeed, traversing all $2\alpha$ arcs certainly provides
the number of closed curves.

The condensed bracket expression, section \ref{secMeanderBracket},
is therefore a prerequisite of the logarithmic algorithm.
Without it, the complexity advantage over a direct inspection is lost.
However, at least the structural properties of the nose retractions remain.

% ======================================================================
\clearpage

\bibliographystyle{alpha_abbrv}
\pdfbookmark{References}{secReferences}
% Adjustments to squeeze references onto last page %%%%%%%%%%%%%%%%%%%%%
%\ignore{
\let\OLDthebibliography\thebibliography
\renewcommand\thebibliography[1]{
  %\vspace{-1ex}
  %\small
  \OLDthebibliography{CMW12} % auto detection of widest label fails
  }
%}%%%%%%%%%%%%%%%%%%%%%%%%%%%%%%%%%%%%%%%%%%%%%%%%%%%%%%%%%%%%%%%%%%%%%%

\bibliography{KarLie-Meander}

\end{document}